\newtheorem{theorem}{Theorem}[section]
\newtheorem{corollary}[theorem]{Corollary}
\newtheorem{lemma}[theorem]{Lemma}
\theoremstyle{definition}
\newcommand{\ol}{\overline}
\newcommand{\sm}{\setminus}
\newcommand{\Complex}{\mathbb{C}}
\newcommand{\0}{\emptyset}
\newcommand{\bd}{\mbox{Bd}}
\newcommand{\dia}{\mbox{diam}}
\newcommand{\sh}{\mbox{Sh}}
\newcommand{\ch}{\mbox{Ch}}
\newcommand{\Ga}{\Gamma}
\newcommand{\e}{\varepsilon}
\newcommand{\al}{\alpha}
\newcommand{\ph}{\varphi}
\newcommand{\be}{\beta}
\newcommand{\ga}{\gamma}
\newcommand{\si}{\sigma}
\newcommand{\ta}{\theta}
\newcommand{\om}{\omega}
\newcommand{\la}{\lambda}
\newcommand{\nin}{\not\in}
\newcommand{\imp}{\mbox{Imp}}
\newcommand{\bs}{\bar{s}}
\newcommand{\hsi}{\hat{\sigma}}
\newcommand{\C}{\mathbb{C}}
\newcommand{\D}{\mathbb{D}}
\newcommand{\bbd}{\mbox{$\mathbb{D}$}}
\newcommand{\disk}{\mathbb{D}}
\newcommand{\ucirc}{\mathbb{S}^1}
\newcommand{\uc}{\mathbb{S}^1}
\newcommand{\lam}{\mathcal{L}}
\newcommand{\A}{\mathcal{A}}
\newcommand{\K}{\mathcal{K}}
\newcommand{\Hc}{\mathcal{H}}
\newcommand{\Ta}{\Theta}
\newcommand{\iy}{\infty}
\newcommand{\cut}{\mathrm{Cut}}
\begin{document}

\date{May 7, 2008}
\title[Basic uniCremer polynomials of arbitrary degree]
{The Julia sets of basic uniCremer polynomials of arbitrary degree}

\author{Alexander~Blokh}

\thanks{The first author was partially
supported by NSF grant DMS-0456748}

\author{Lex Oversteegen}

\thanks{The second author was partially  supported
by NSF grant DMS-0405774}

\address[Alexander~Blokh and Lex~Oversteegen]
{Department of Mathematics\\ University of Alabama at Birmingham\\
Birmingham, AL 35294-1170}

\email[Alexander~Blokh]{ablokh@math.uab.edu}
\email[Lex~Oversteegen]{overstee@math.uab.edu} \subjclass[2000]{Primary 37F10;
Secondary 37F50, 37B45, 37C25, 54F15}

\keywords{Complex dynamics; Julia set; Cremer fixed point}

\begin{abstract} Let $P$ be a polynomial of degree $d$ with
a Cremer point $p$ and no repelling or parabolic periodic bi-accessible points.
We show that there are two types of such Julia sets
$J_P$. The \emph{red dwarf} $J_P$ are nowhere connected im kleinen and such
that the intersection of all impressions of external angles is a continuum
containing $p$ and the orbits of all critical images. The
\emph{solar} $J_P$ are such that every angle with dense orbit has a degenerate
impression disjoint from other impressions and $J_P$ is connected im kleinen
at its landing point. We study bi-accessible points and locally connected models
of $J_P$ and show that such sets $J_P$ appear through polynomial-like maps for generic polynomials
with Cremer points. Since  known tools break down for $d>2$
(if $d>2$ it is not known if there are \emph{small cycles} near $p$ while if $d=2$
this result is due to Yoccoz), we introduce \emph{wandering
ray continua} in $J_P$ and provide a new application of  \emph{Thurston laminations}.
\end{abstract}

\maketitle

\section{Introduction and description of the results}\label{intro}

Consider a degree $d$ polynomial map $P:\C\to \C$ of the complex plane $\C$;
denote the Julia set of $P$ by $J_P=J$, the filled-in Julia set by $K_P=K$ and
the basin of infinity by $A_\iy$. Assume that $J$ is connected. Call an
irrational neutral periodic point a \emph{CS-point}. In his fundamental paper
\cite{kiwi97} Kiwi showed that if $P$ has no CS-points then $P|_J$ is modeled
by an \emph{induced map on a topological (locally connected) Julia set} and $J$ is
locally connected at its preperiodic points. However in the case when $P$ has
CS-points similar results are not known, and there are few results about the
topological structure of $J$.

We address this issue and consider \emph{basic uniCremer} polynomials $P$
defined as those with a Cremer periodic point and without repelling (parabolic)
periodic bi-accessible points (by \cite{gm93, kiwi00} then the Cremer point is
unique and fixed). We show that then for $J$ either (a) all impressions are
``big'' (contain an invariant continuum) and $J$ is nowhere \emph{connected im
kleinen} (topological notion close to local connectedness \cite{m62}), or (b)
there are plenty of degenerate impressions with dense orbits in $J$ at each of
which $J$ is connected im kleinen. This dichotomy in a way takes further a
standard dichotomy between non-locally connected and locally connected Julia
sets.

Similar results for $d=2$ were obtained in \cite{bo06}. However
in \cite{bo06} we strongly rely on \cite{grismayeover99} and Yoccoz's small
cycles \cite{yoc95} none of which is  known in general. The novelty of our
present approach to polynomials of \emph{arbitrary} degrees is that we use
Thurston laminations (previously not used in this setting), and rely on our new
fixed point result obtained in \cite{bo08}. This together with a new argument leads to the
generalization of the results of \cite{bo06} and to new results concerning
bi-accessible points of a basic uniCremer Julia set $J$ and the fact that
any locally connected quotient spaces of $J$ must be degenerate.

The study of basic uniCremer Julia sets is justified  by two facts (see Section~\ref{sketch}):
(1) all polynomials
with Cremer points but without critical preperiodic and parabolic points
contain ``basic uniCremer polynomial-like Julia sets'' as subsets; (2) a
polynomial with one critical point and a Cremer fixed point must be basic
uniCremer.

Let us pass on to the precise statements. It is known that there exists a
conformal isomorphism $\Psi$ from the complement of the closure of the open
unit disk $\bbd$ onto the complement of $K$. Moreover, $\Psi$ conjugates
$z^d|_{{\C\sm \ol{\D}}}$ and $P|_{{\C\sm K}}$. The $\Psi$-image $R_\al$ of the
radial line of angle $\al$ in $\C\sm \D$ is called an \emph{(external) ray}. If
$J$ is locally connected, $\Psi$ extends to a continuous function $\ol{\Psi}$
which semiconjugates $z^d|_{{\C\sm \D}}$ and $P|_{\ol{A_\iy}}$. Set
$\psi=\ol{\Psi}|_{\ucirc}$ and define an equivalence relation $\sim_P$ on
$\ucirc$ by $x \sim_P y$ if and only if $\psi(x)=\psi(y)$. The equivalence
$\sim_P$ is called the \emph{($d$-invariant) lamination (generated by $P$)}.
The quotient space $\ucirc/\sim_P=J_{\sim_P}$ is homeomorphic to $J$ and the
map $f_{\sim_P}:J_{\sim_P}\to J_{\sim_P}$ induced by $z^d|_{\uc}\equiv \si$ is
topologically conjugate to $P|_J$. The set $J_{\sim_P}$ (with the map
$f_{\sim_P}$) is a model of $P|_J$, called the \emph{topological Julia set}.

In his fundamental paper \cite{kiwi97} Kiwi extended this to polynomials $P$
with connected Julia set and no irrational neutral periodic points (called
\emph{CS-points}) for which he obtained a $d$-invariant lamination $\sim_P$ on
$\ucirc$ with $P|_J$ semi-conjugate to the induced map
$f_{\sim_P}:J_{\sim_P}\to J_{\sim_P}$ by a monotone map $m:J\to J_{\sim_{P}}$
(\emph{monotone} means a map with connected point preimages). Kiwi also proved
that for all periodic points $p\in J$ the set $J$ is locally connected at $p$
and $m^{-1}\circ m(p)=\{p\}$. However in some cases the results of
\cite{kiwi97} do not apply. E.g., take a quadratic polynomial $P$ with a
\emph{Cremer fixed point} (i.e. with a neutral non-linearizable fixed point
$p\in J$ such that $P'(p)=e^{2\pi i \al}$ with $\al$ irrational). By Sullivan
\cite{sul83}, the Julia set $J_P$ is not locally connected. Moreover, since $P$
has a CS-point, Kiwi's results from \cite{kiwi97} do not apply.

As mentioned above, we addressed this issue for quadratic polynomials with
fixed Cremer point in \cite{bo06}. Yet the tools from \cite{bo06} have a
limited, ``quadratic'', nature. Most importantly, \cite{bo06} makes an
essential use of \cite{grismayeover99} where so-called \emph{building blocks}
were studied. However, the \cite{grismayeover99} applies only in the quadratic
case: it is based in particular upon the existence of \emph{small cycles} near
the Cremer point \cite{yoc95}, and as of yet, it is not known for polynomials
of higher degrees whether small cycles near Cremer points exist or not. Thus,
the building blocks machinery breaks down for higher degrees and needs to be
replaced.

Also, the main topological result in \cite{bo06} is Theorem 2.3 which says that
\emph{if $K$ is a non-separating $g^n$-invariant plane continuum/point not
containing a Cremer point of a polynomial $g$, $g^n|_K$ is a
\textbf{homeomorphism} in an open neighborhood $V\supset K$, and some technical conditions
are satisfied, then $K$ is a point}. It sufficed to consider a homeomorphism in
\cite{bo06} because if $d=2$ ($g$ is quadratic) then $p\nin K$ implies $c\nin
K$ (the critical point $c$ is recurrent and $p\in \om(c)$ \cite{mane93}). If
$d>2$, there are other critical points, hence one needs to generalize Theorem
2.3 of \cite{bo06} to non-homeomorphisms.

Thus, to study polynomials of higher degrees with Cremer periodic points new
tools are needed. Here we suggest a different approach allowing us to extend
the results of \cite{bo06} (by \cite{sz99} quadratic polynomials with a fixed
Cremer point are basic uniCremer, so the present paper extends \cite{bo06}).
Namely, we use Thurston laminations, the notion of a \emph{wandering ray
continuum}, and a new topological result from \cite{bo08}, and generalize
\cite{bo06} to basic uniCremer polynomials of arbitrary degree. Also, we
obtain such results as Theorem~\ref{nowander} (concerning certain wandering
sets and bi-accessible points in basic uniCremer Julia sets) and
Theorem~\ref{nomonproj} (which shows that \emph{any} monotone map of $J$ onto a
locally connected continuum collapses $J$ to a point).

To proceed we need a few definitions. Let  $\Psi$ be a conformal isomorphism
from the complement of the closure of the open
unit disk $\bbd$ onto the complement of $K$. We identify points on the unit circle with their
argument $\al\in[0,2\pi)$ and call them \emph{angles}. For each angle $\al\in\ucirc$,
the \emph{impression of $\al$}, denoted
by $\imp(\al)$, is the set $\{w\in \C\mid \text{there exists a sequence } z_i\to\al
\text{ such that } \lim \Psi(z_i)=w\}$. Then $\imp(\al)$ is a subcontinuum of $J$.

If the impression $\imp(\al)$ is disjoint
from all other impressions then the angle $\al$ is called \emph{K-separate}. A
continuum $X$ is \emph{connected im kleinen at a point $x$} \cite{m62} provided
for each open set $U$ containing $x$ there exists a connected set $C\subset U$
with $x$ in the interior of $C$. Now we can state our main theorem which generalizes to
basic uniCremer polynomials the results of \cite{bo06}.

\setcounter{section}{5} \setcounter{theorem}{9}

\begin{theorem}\label{maina1} For a basic uniCremer polynomial $P$ the following
facts are equivalent:

\begin{enumerate}

\item there is an impression not containing the Cremer point;

\item there is a degenerate impression;

\item the set $Y$ of all K-separate angles with degenerate
impressions contains all angles with dense orbits and a dense set of periodic
angles, and the Julia set $J$ is connected im kleinen at the landing points of
the corresponding rays;

\item there is a point at which the Julia set is connected im
kleinen.

\end{enumerate}

\end{theorem}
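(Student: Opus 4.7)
The plan is to split the equivalence into its easy directions and its one substantive direction. The implications (3)$\Rightarrow$(2) and (3)$\Rightarrow$(4) are immediate from the definition of $Y$ and from the clause about connected im kleinen in (3). For (2)$\Rightarrow$(1), I use that $P$ is basic uniCremer: the Cremer fixed point $p$ admits no external ray landing at it, since a periodic landing point must be repelling or parabolic and bi-accessible, which is forbidden by hypothesis. Hence no degenerate impression can equal $\{p\}$, so any degenerate impression witnesses (1).

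For (4)$\Rightarrow$(1), suppose $J$ is connected im kleinen at $x$. First I argue $x\ne p$: the dynamics near a Cremer fixed point in a basic uniCremer setting force infinitely many ray impressions to accumulate on $p$ from disjoint directions, so local connectedness im kleinen at $p$ fails. Then I choose a connected subcontinuum $C\subset J$ with $x$ in its $J$-interior and $p\nin C$. Any external angle $\al_0$ whose prime-end approach converges to an access to $x$ has $\imp(\al_0)$ contained in arbitrarily small neighborhoods of $x$ within $C$, giving $p\nin\imp(\al_0)$.

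The main work is (1)$\Rightarrow$(3). Starting from $\al_0$ with $p\nin\imp(\al_0)$, I would first pass to the $d$-invariant Thurston lamination $\sim_P$ and the topological Julia set $J_{\sim_P}$ (which here is obtained via the lamination machinery rather than via Kiwi's theorem, since Kiwi's hypotheses fail in the presence of a Cremer point) and the monotone factor $m\colon J\to J_{\sim_P}$. I then execute the following sequence: (i) for each $\al$ whose $\si$-orbit is dense in $\uc$, show $\imp(\al)$ is degenerate, because a non-degenerate $\imp(\al)$ would, via forward iteration, yield a wandering ray continuum to which the fixed point theorem of \cite{bo08} applies, producing a contradiction with the basic uniCremer hypothesis on $p$; (ii) upgrade such $\al$ to K-separate angles, observing that any other angle sharing the landing point would either render it bi-accessible periodic (forbidden) or force a non-trivial $\sim_P$-class already excluded in step (i); (iii) transfer local connectedness of $J_{\sim_P}$ at the corresponding singleton class back to $J$ at the landing point using $m$; (iv) repeat the argument of (i)--(iii) with suitable periodic angles in place of dense-orbit angles, using that the periodic landing points so produced cannot be repelling or parabolic bi-accessible (forbidden) nor equal to $p$ (ruled out by the wandering continuum/fixed point argument), thereby producing a dense set of periodic angles in $Y$.

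The hardest step is (i): extracting from a putative non-degenerate $\imp(\al)$ a genuine wandering ray continuum to which \cite{bo08} applies. In the quadratic case one would invoke Yoccoz's small cycles and the building block machinery of \cite{grismayeover99}, neither of which is available in higher degrees; the wandering ray continuum notion introduced earlier in the paper is designed precisely to substitute for them, and the delicate point is to organize the forward orbit of a non-degenerate impression so that the hypotheses of the generalized (non-homeomorphism) fixed point theorem of \cite{bo08} are verifiable, despite the presence of additional critical points that obstruct the simpler quadratic arguments.
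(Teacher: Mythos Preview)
Your plan for $(1)\Rightarrow(3)$ rests on a tool that the paper itself proves does not exist in this setting. You propose to use a locally connected model $J_{\sim_P}$ together with a monotone factor $m\colon J\to J_{\sim_P}$, and then in step~(iii) to ``transfer local connectedness of $J_{\sim_P}$ at the corresponding singleton class back to $J$.'' But Theorem~\ref{nomonproj} (proved just before the main theorem) shows that \emph{any} monotone map from a basic uniCremer Julia set onto a locally connected continuum collapses $J$ to a point. So $J_{\sim_P}$ is a singleton, the factor $m$ carries no information, and steps~(ii)--(iv) built on it are vacuous. The Thurston laminations that the paper actually uses (Theorems~\ref{lamexist} and~\ref{nowander}) are constructed from a single wandering ray continuum and are used only to rule out certain configurations, not to produce a global locally connected quotient.

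The second genuine gap is in step~(i). You want to deduce degeneracy of $\imp(\al)$ for dense-orbit $\al$ from the wandering ray continuum machinery and the fixed point result of \cite{bo08}. Those tools are indeed used in the paper, but they yield only \emph{K-separateness} (Lemma~\ref{densenotcrit}), not degeneracy. The degeneracy of $\imp(\al)$ is obtained by an entirely different mechanism (Lemma~\ref{transimp}): one first shows that no critical point has a dense orbit, so the closure of the critical orbits is nowhere dense; one then finds a Jordan disk $W''$ away from this closure meeting infinitely many forward images of $\imp(\al)$, and applies Ma\~n\'e's theorem \cite{mane93} on shrinking univalent pullbacks to force $\dia(\imp(\al))=0$. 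Nothing in your outline substitutes for this analytic step.

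Two smaller points. Your $(2)\Rightarrow(1)$ claims that no ray can land at $p$ because ``a periodic landing point must be repelling or parabolic and bi-accessible, which is forbidden by hypothesis'': the basic uniCremer hypothesis forbids repelling/parabolic periodic bi-accessible points, not a possible bi-accessibility of $p$ itself, and a priori a non-periodic angle could have $\imp(\al)=\{p\}$. The paper instead observes that the $d$ angles $\al,\al+1/d,\dots,\al+(d-1)/d$ all have degenerate impressions and at most one of these can equal $\{p\}$. Your $(4)\Rightarrow(1)$ is also hand-wavy where the paper is clean: if all impressions contain $p$, Lemma~\ref{redwarf} (via Lemma~\ref{cikalt}) gives that $J$ is nowhere connected im kleinen, contradicting~(4); no separate argument that $x\ne p$ is needed.
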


Basic uniCremer Julia set with the properties from Theorem~\ref{maina1} are
said to be \emph{solar}. The remaining basic uniCremer Julia sets are called
\emph{red dwarf} Julia sets. They can be defined as basic uniCremer Julia sets
such that all impressions contain $p$. These notions have been introduced in
\cite{bo06} and were further studied in \cite{bbco07} where it was shown that solar Julia sets
of degree two, with positive Lebesgue measure, exist. The following lemma describes
red dwarf Julia sets and complements Theorem~\ref{maina1}.

\setcounter{section}{5} \setcounter{theorem}{2}

\begin{lemma}\label{redwarf1} If $J$ is a red dwarf Julia set then the (non-empty)
intersection of all impressions contains all forward images of all critical
points, there exists $\e>0$ such that the diameter of any impression is greater
than $\e$, and there are no points at which $J$ is connected im kleinen.
Moreover, in this case no point of $J$ is bi-accessible and $p$ is not
accessible from $\C\setminus J$.
\end{lemma}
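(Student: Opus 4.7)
The plan is to set $I = \bigcap_{\alpha \in \ucirc} \imp(\alpha)$ and prove the assertions of the lemma in sequence. By the red dwarf hypothesis $p \in I$, so $I \neq \emptyset$; since $P(\imp(\alpha)) = \imp(\sigma(\alpha))$, $I$ is forward $P$-invariant.

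To place all forward images of critical points in $I$, I would combine the Ma\~n\'e-type fact that the Cremer fixed point $p$ lies in the $\omega$-limit set of some critical point with the Thurston lamination machinery developed earlier. Specifically, if $v = P^n(c) \notin \imp(\alpha_0)$ for some angle $\alpha_0$, then upper semicontinuity of impressions makes $U_v = \{\gamma : v \notin \imp(\gamma)\}$ open in $\ucirc$; forward invariance of impressions under $\sigma$ together with the recurrence of the critical orbit near $p$ would, via a pullback and a lamination argument, contradict the non-existence of wandering ray continua established earlier (cf.~Theorem~\ref{nowander}). Hence every critical image lies in $I$. This is the main obstacle, requiring fine control of chords in the Thurston lamination and precise use of the paper's wandering continuum theory.

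For the uniform diameter bound, assume $\dia(\imp(\alpha_n)) \to 0$ along some sequence. Since impressions are connected and all contain $p$, passing to a subsequence gives $\imp(\alpha_n) \to \{p\}$ in Hausdorff distance. By the previous step each impression contains a fixed critical image $v$, which may be chosen distinct from $p$ (in the red dwarf setting not every critical grand orbit collapses to $\{p\}$, since otherwise a degenerate impression would appear, contradicting Theorem~\ref{maina1}). Passing to the Hausdorff limit forces $v = p$, a contradiction; thus some $\varepsilon > 0$ works. That $J$ is connected im kleinen at no point is now immediate from Theorem~\ref{maina1}: red dwarf violates assertion (2) of that theorem, so by the equivalence assertion (4) fails everywhere.

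For the last two claims, suppose $x \in J$ is bi-accessible via $R_\alpha, R_\beta$ with $\alpha \neq \beta$. Iterating $P$, either some $P^k(x)$ becomes a periodic bi-accessible point --- necessarily repelling or parabolic, contradicting the basic uniCremer hypothesis --- or the forward orbit of the chord $\{\alpha, \beta\}$ is an infinite non-preperiodic orbit in the Thurston lamination, yielding a wandering ray continuum forbidden by Theorem~\ref{nowander}. For the inaccessibility of $p$ from $\C \setminus J$, no external ray lands at a Cremer fixed point (a classical consequence of non-linearizability); in our setting, any access arc in $\C \setminus J$ terminating at $p$ would in particular produce such a landing ray, giving the contradiction.
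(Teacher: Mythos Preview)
Your proposal has several genuine gaps. First, you invoke Theorem~\ref{maina1} twice---to argue that some critical image $v\ne p$ exists, and to conclude that $J$ is nowhere connected im kleinen---but in the paper Lemma~\ref{redwarf1} is proved \emph{before} Theorem~\ref{maina1}, and in fact the implication $(4)\Rightarrow(1)$ of Theorem~\ref{maina1} rests on Lemma~\ref{redwarf1}. So this is circular. The paper instead gets ``nowhere connected im kleinen'' directly from the uniform diameter bound via Lemma~\ref{cikalt}(2), and obtains a critical image distinct from $p$ from Ma\~n\'e's theorem (there is a recurrent critical point whose orbit avoids $p$).

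Second, your bi-accessibility argument does not close. Theorem~\ref{nowander} does \emph{not} forbid wandering ray continua; it only says that the associated angle set eventually collapses, i.e.\ the point is precritical or preCremer. So your dichotomy ``periodic bi-accessible, or wandering hence forbidden'' leaves open precisely the precritical bi-accessible case and gives no contradiction. The paper's argument is completely different and uses the red dwarf hypothesis directly: if $x\ne p$ is bi-accessible, the two landing rays together with $x$ cut the plane into two pieces, and the piece not containing $p$ carries external rays whose impressions then cannot contain $p$---contradicting the red dwarf assumption. The case $x=p$ is reduced to this by passing to a preimage.

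Finally, your argument that every critical image lies in $I$ via ``a pullback and a lamination argument'' contradicting Theorem~\ref{nowander} is not a proof as written: you never produce a wandering ray continuum connected to two or more angles, and it is unclear how openness of $U_v$ and recurrence alone would yield one. The paper's argument is again elementary and purely topological: if $P(c)\notin\imp(\alpha)$, join $P(c)$ to $\infty$ by an arc $T$ missing $\imp(\alpha)$; the component of $P^{-1}(T)$ through the critical point $c$ separates the plane into at least two pieces, each containing a $P$-preimage of $\imp(\alpha)$. But every such preimage is itself an impression and hence contains $p$, so they all lie in a single complementary component---a contradiction.
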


We also prove Theorem~\ref{nowander1} which partially extends to basic
uniCremer polynomials results of Schleicher and Zakeri \cite{sz99} related to
the question of McMullen \cite{mcm94} as to whether a quadratic Julia set with
a fixed Cremer point has bi-accessible points. Given a set $B$ of angles, set
$\imp(B)=\cup_{\be\in B} \imp(\be)$ ($\Pi(B)=\cup_{\be\in B} \Pi(\be)$
($\imp(\al)$ is the impression of an angle $\al$ and $\Pi(\al)=\ol{R_\al}\sm R_\al$ is the
\emph{principal} set of an angle $\al$). Call $X$ a \emph{ray continuum} if $X$ is a
continuum or a point and for some set of angles $B$ we have $\Pi(B)\subset
X\subset \imp(B)$. The maximal set $B$ of angles such that $\Pi(B)\subset
X\subset \imp(B)$ is said to be \emph{connected} to $X$. A set is said to be
\emph{wandering} if all its images are pairwise disjoint.

\setcounter{section}{4} \setcounter{theorem}{2}

\begin{theorem}\label{nowander1}
Suppose that $J$ is the Julia set of a basic uniCremer polynomial $P$ of degree
$d$. Suppose that $K'\subset J$ is a wandering ray continuum connected to a set
of angles $\A'$ with $|\A'|\ge 2$. Then there exists an $n$ such that $|\si^n(\A')|$ is a
singleton and for every $\ta\in \A'$, $\cup_{n\ge 0} \si^n(\ta)$ is not dense
in $\ucirc$. In particular, $K'$ is pre-critical, and if a point $x\in J$ is
bi-accessible then it is either precritical or preCremer.
\end{theorem}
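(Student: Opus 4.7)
The plan is to recast ``wandering ray continuum with $|\A'|\ge 2$'' as a wandering object in the Thurston lamination $\sim_P$, apply a no-wandering-polygons theorem in degree $d$, and then specialize to bi-accessible points. Throughout I would work with the convex hulls $\ch(\si^n\A')\subset\ol{\D}$ as the lamination-side avatars of $P^n(K')$.

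Wandering of $\{P^n(K')\}$ forces wandering of $\{\ch(\si^n\A')\}$: two hulls sharing a vertex $\gamma$ would give $\Pi(\gamma)\subset P^n(K')\cap P^m(K')\ne\0$, contradicting wandering; and polygons with disjoint vertex sets cannot cross because leaves of $\sim_P$ do not cross. The main dynamical input is then that a wandering family of polygons with $\ge 2$ vertices must eventually collapse in vertex count, i.e., there exists $n$ with $|\si^n\A'|=1$. This is a degree-$d$ analog of Thurston's no-wandering-triangles theorem; the classical quadratic proofs (Yoccoz small cycles, the building-block machinery of \cite{grismayeover99}) are unavailable here, so the argument must be powered by Thurston-lamination combinatorics together with the fixed-point theorem of \cite{bo08}, which rules out the sort of invariant continua that would survive indefinite non-collapsing iteration. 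Taking $n$ minimal with $|\si^n\A'|=1$, two distinct angles in $\si^{n-1}\A'$ are identified by $\si$, so the chord between them is a critical leaf of $\sim_P$ and $P^{n-1}(K')$ meets a critical fiber; hence $K'$ is pre-critical. For the non-density claim, if some $\ta\in\A'$ had dense $\si$-orbit, a subsequence $\si^{n_k}(\ta)\to\ta$ combined with upper semicontinuity of impressions and $\Pi(\ta)\subset K'$ would give $P^{n_k}(K')\cap K'\ne\0$ for large $k$, again contradicting wandering.

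For the bi-accessibility corollary, let $x\in J$ be bi-accessible via distinct rays $R_\al,R_\be$. The singleton $\{x\}$ is a ray continuum connected to some $B\supset\{\al,\be\}$. Suppose $x$ is neither pre-critical nor preCremer. Then $x$ is not pre-periodic: iterating $P$ preserves bi-accessibility of landing points, so any periodic iterate of $x$ would be a bi-accessible periodic point, which by the basic uniCremer hypothesis must be the Cremer fixed point $p$, making $x$ preCremer. Thus all $P^n(x)$ are distinct, $\{x\}$ is a wandering ray continuum with $|B|\ge 2$, and the previous step forces $x$ pre-critical---contradiction.

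I expect the main obstacle to be the no-wandering-polygons step in degree $d>2$: classical quadratic tools simply do not apply, and the extension requires both the Thurston-lamination apparatus and the fixed-point result of \cite{bo08} to exclude wandering polygons with $\ge 2$ vertices that persist indefinitely. Once that is set up, the pre-critical conclusion and the bi-accessibility corollary follow essentially formally from the lamination translation.
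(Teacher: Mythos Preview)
Your outline identifies the right ingredients (Thurston-type laminations and the fixed-point result of \cite{bo08}) but leaves the central step as a black box and contains two genuine errors.

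\textbf{The lamination is not $\sim_P$.} You repeatedly invoke ``leaves of $\sim_P$'' and a ``no-wandering-polygons theorem in $\sim_P$''. For a basic uniCremer polynomial $J$ is never locally connected, so $\sim_P$ in the usual sense does not exist; there is no ambient lamination to appeal to. The paper first \emph{builds} a lamination $\lam(K')$ from the grand orbit of $K'$ (Theorem~\ref{lamexist}): one shows pullbacks of images of $K$ are well-defined and pairwise disjoint, takes the convex hulls of the associated angle sets, and closes up. The unlinking of distinct $\ch(\si^n\A')$ is not because ``leaves of $\sim_P$ do not cross'' but is a direct planar fact (Lemma~\ref{jic}): disjoint ray continua force unlinked angle sets.

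\textbf{The collapsing step is the whole proof, and you have not supplied it.} Saying the argument ``must be powered by'' laminations and \cite{bo08} is not an argument. The paper's mechanism is concrete: for each element $D$ of the grand orbit, take the closed half-disk $H_D\subset\ol{\D}$ on the side of the leaf $\ell_D$ that corresponds to the wedge containing $p$; set $H=\bigcap_D H_D$. Then $H$ is a point, a leaf, or a gap of $\lam(K')$, and $A(p)\subset H\cap\uc$. One shows there are no critical leaves on $\partial H$ and that $\si(H\cap\uc)\subset H\cap\uc$; an arc-length argument then produces a \emph{periodic} leaf $\ell\subset\partial H$ of some period $m$. Choosing an element $Q$ of the grand orbit close to $\ell$ and forming the puzzle-piece $X$ bounded by $Q$ and its rays, one checks $P^m$ pushes $Q$ into the wedge not containing $p$, so Corollary~\ref{degimp1} yields a $P^m$-fixed point with rotating rays, contradicting ``basic uniCremer''. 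This construction of $H$ and the location of a periodic boundary leaf is the missing idea in your proposal.

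\textbf{The non-density argument is wrong as stated.} From $\si^{n_k}(\ta)\to\ta$ and upper semicontinuity of impressions you get only that $P^{n_k}(K')\subset\imp(\si^{n_k}(\A'))$ lies in an arbitrarily small neighborhood of $\imp(\ta)$; this does \emph{not} give $P^{n_k}(K')\cap K'\ne\0$. Nearby impressions need not intersect, and $K'$ may be a proper subcontinuum of $\imp(\A')$. The paper's argument is again via separation and Corollary~\ref{degimp1}: if the $\si$-orbit of $\A$ were dense, one finds $m$ with $P^m(K)$ lying in the wedge cut off from $p$ by $K$ and its rays, and the resulting puzzle-piece forces a bi-accessible periodic point.

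Your bi-accessibility corollary is fine once the main statement is available.
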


In Theorem~\ref{nomonproj1} we show that Kiwi's results \cite{kiwi97} do not
apply to basic uniCremer polynomials for principal reasons: any monotone map of
a basic uniCremer Julia set must collapse the whole set to a point.

\setcounter{section}{5} \setcounter{theorem}{1}

\begin{theorem}\label{nomonproj1} Suppose that $P$ is a basic uniCremer polynomial
and $\ph:J\to A$ is a monotone map of $J$ onto a locally connected continuum
$A$. Then $A$ is a singleton.
\end{theorem}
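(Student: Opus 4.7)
The plan is to argue by contradiction: assume $A$ is non-degenerate, and split into two cases according to the dichotomy between \emph{solar} and \emph{red dwarf} Julia sets provided by Theorem~\ref{maina1} and Lemma~\ref{redwarf1}.

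In the red dwarf case, Lemma~\ref{redwarf1} asserts that every impression contains the Cremer point $p$, there is a uniform lower bound $\e$ on diameters of impressions, no point of $J$ admits connected im kleinen, no point is bi-accessible, and $p$ is not accessible from $\C\setminus J$. Write $a_0=\varphi(p)$; then every continuum $\varphi(\imp(\alpha))\subset A$ contains $a_0$. Picking a point $a\in A\setminus\{a_0\}$ and a nested base $\{U_n\}$ of connected open neighborhoods of $a$ with $a_0\notin \bar U_n$, the saturated open sets $\varphi^{-1}(U_n)$ shrink to the fiber $\varphi^{-1}(a)\subset J$ and are disjoint from $\varphi^{-1}(a_0)\ni p$. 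I would use this together with the forward invariance of impressions under $P$ and the uniform diameter bound to exhibit an impression entirely contained in one of the $\varphi^{-1}(\bar U_n)$, which would contradict its having to contain $p$.

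In the solar case, Theorem~\ref{maina1}(3) supplies a dense set $Y$ of K-separate angles with degenerate impressions and dense $\sigma$-orbits, at whose landing points $J$ is connected im kleinen. If $A$ is non-degenerate, choose two angles $\alpha_1,\alpha_2\in Y$ whose landing points $x_1,x_2\in J$ satisfy $\varphi(x_1)\neq \varphi(x_2)$, and connect them by an arc $\tau\subset A$. For a suitable interior point $a\in \tau$ the fiber $K_a=\varphi^{-1}(a)$ is a non-degenerate continuum meeting neither the (countable) precritical nor preCremer set. I would then show that $K_a$ is a wandering ray continuum connected to at least two angles---using the local connectedness of $A$ to pair $K_a$ with a family of accumulating external rays via neighboring fibers on $\tau$---and then invoke Theorem~\ref{nowander1} to conclude $K_a$ is precritical, the desired contradiction.

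The principal obstacle is that $\varphi$ is not assumed to be dynamical, so neither the fibers $\varphi^{-1}(a)$ nor the $\varphi$-induced ``lamination'' on $\ucirc$ are a priori $\sigma$-invariant, while both Theorem~\ref{nowander1} and the rigidity behind Lemma~\ref{redwarf1} are inherently dynamical. Upgrading the purely topological structure delivered by $\varphi$ to one compatible with $P$-dynamics is the crux of the argument; this is where the basic uniCremer hypotheses---the uniqueness of the Cremer fixed point and the absence of bi-accessible repelling/parabolic periodic points---together with the Thurston lamination viewpoint emphasized in the introduction, must do the heavy lifting.
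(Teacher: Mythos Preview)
Your proposal has a genuine gap: you never establish the key lemma that $\varphi(\imp(\alpha))$ is a single point for every angle $\alpha$. This is the foundational step, and without it neither of your two cases can be completed. The paper proves it by extending $\varphi$ to a map $\Phi$ on the whole plane via Moore's theorem, pushing forward an $R_\alpha$-defining family of crosscuts (Lemma~\ref{dist0}), and invoking Carath\'eodory theory on the locally connected image $A$. Once impressions collapse, your red dwarf case becomes a one-line observation (every impression contains $p$, hence every impression maps to $\varphi(p)$, hence $\varphi(J)=\{\varphi(p)\}$); your attempt to ``exhibit an impression entirely contained in one of the $\varphi^{-1}(\bar U_n)$'' via forward invariance and diameter bounds has no visible mechanism and is unnecessary.

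In the solar case your plan also breaks down at the step you yourself flag: you want to show the fiber $K_a$ is \emph{wandering} so that Theorem~\ref{nowander1} forces it to be precritical, but since $\varphi$ carries no dynamics there is no reason for distinct iterates $P^l(K_a)$, $P^m(K_a)$ to be disjoint. The paper does not attempt this. Instead, after the collapse-of-impressions lemma it shows that a generic fiber $K=\varphi^{-1}(y)$ is a union of \emph{exactly two} impressions $\imp(\alpha)\cup\imp(\beta)$, with $y$ chosen so that $K$ avoids the grand orbits of $p$ and of the critical points. Then the argument runs the \emph{contrapositive} of Theorem~\ref{nowander1}: since the forward orbit of $K$ never meets a critical point, the pair $\{\sigma^n(\alpha),\sigma^n(\beta)\}$ never collapses, so $K$ cannot be wandering. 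The resulting collision $P^l(K)\cap P^m(K)\ne\emptyset$ is then analyzed: either the angle pair is periodic (producing a repelling bi-accessible periodic point, contradicting basic uniCremer), or a third angle's impression meets $P^l(K)$ and pulls back to a third angle at $K$, contradicting the ``exactly two'' count. Note that this argument is uniform and does not split into solar/red dwarf cases.
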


The paper is designed as follows. In Section~\ref{sketch} we motivate our work
by showing that all polynomials with Cremer points, but without critical preperiodic
and parabolic points, contain ``basic uniCremer polynomial-like Julia sets'' as
subsets and that a polynomial with one critical point and a Cremer fixed point
must be basic uniCremer. In Section~\ref{fip} we develop the topological
tools. In Section~\ref{wandecon} we develop the  laminational tools and
use them to study some wandering continua in the Julia set of a basic uniCremer
polynomial. Finally, in Section~\ref{quadra} we obtain our main results.

\setcounter{section}{1} \setcounter{theorem}{0}

\section{Motivation}\label{sketch}

In this section we suggest additional motivation for our research and provide a
criterion for a polynomial to be basic uniCremer. Applying renormalization
ideas that allow one to find ``little" Julia sets (of appropriate
polynomial-like maps) inside ``big" ones, we show that \emph{if a polynomial
$P$, with connected Julia set $J$, has no critical preperiodic
and parabolic points, then a periodic Cremer point $p$ of $P$ belongs to a polynomial-like
Julia set, contained in $J$, corresponding to a basic uniCremer polynomial}. Thus, basic
uniCremer Julia sets appear in Julia sets of \emph{almost all} polynomials with
Cremer points as Julia sets of polynomial-like maps. Similar constructions,
based upon \cite{dh85b}, can be found, e.g., in \cite{hub93, lev98, sch04}. We
also sketch the proof of the fact that if no critical point of a polynomial $P$
with a fixed Cremer point $p$ is separated from $p$ by means of repelling
periodic bi-accessible points and their preimages in $J_P$ then $P$ is basic
uniCremer. Finally, we show that a polynomial with unique critical point and
fixed Cremer point is basic uniCremer.

Let $p$ be a fixed Cremer point of a polynomial $P$ with connected Julia set $J$.
Denote the set of all periodic repelling bi-accessible points of $P$ by $B'\ne
\0$. Consider the set $B$ of \emph{all} preimages of points of $B'$. The rays
landing at any $x\in B$ partition $\C$ into the wedges. Choose the closed wedge
$W_x$ containing $p$ and denote its boundary rays by $R_{\al_x}$ and
$R_{\be_x}$. Connect $\al_x$ and $\be_x$ with a chord $\ell=\ol{\al_x\be_x}$
inside $\bbd$ and consider the closure $T_x$ of the part of $\bbd\sm \ell$
which corresponds to the angles with rays inside $W_x$. In this case call
$\ell$ an \emph{admissible} chord, set $x=z_\ell$ and call
$\cut_\ell=\ol{R_{\al_x}\cup R_{\be_x}}$ an \emph{admissible} cut. If $X\subset
B$ then we denote by $W_X$ the intersection of all wedges $W_x$, $x\in X$.
Consider the intersection $W_B=W$ of  wedges $W_x$ and the intersection $T$ of
sets $T_x$ over \emph{all} points $x\in B$. For a chord $\ell=\ol{\al\be}\subset
\partial T$ let $I_\ell=(\al, \be)\subset\ucirc$ be the open arc disjoint from $T$; call $I_\ell$
the \emph{prime arc} of $\ell$.

We show that $W\cap J_P$ is the connected Julia set of a
polynomial-like map. It is easy to see that 1) $W\cap J_P$ is an invariant
continuum, 2) $T$ is a subcontinuum of $\bbd$, and 3) $\partial T$ consists of
points in $\partial T\cap \uc$ (nowhere dense in $\uc$) and a collection $\K$ of pairwise
disjoint chords (except perhaps for the endpoints). If we connect the
$\si$-images of the endpoints of a chord $\ell$ from $\K$, we get a chord from
$\K$ understood as the ``image'' $\hsi(\ell)$ of $\ell$. Let $\Hc$ be the
family of all $\hsi$-periodic chords (in this sense) in $\K$. Then a) $\Hc$ is
finite and \emph{can be assumed to consist only of chords with fixed
endpoints}, b) there are no critical chords in $\K$, c) all chords in $\K$
eventually map onto chords of $\Hc$. In particular, $\Hc$ is not empty.

Let $\A=\{A_1=(\al_1, \be_1), \dots, A_n=(\al_n, \be_n)\}$ be \emph{all}
components of $\uc\sm T$ longer than $\frac 1d$ with $\ell_1=\ol{\al_1\be_1},
\dots, \ell_n=\ol{\al_n\be_n}$. The family $\A$ is non-empty, e.g. because for
any chord $\ell\in \Hc$ the arc $I_\ell$ is longer than $\frac 1d$. Moreover,
by the construction there exists $i$ and an arc $B_i\subset A_i$ with endpoints $\ta,
\ga$ such that $B_i$ is longer than $\frac 1d$ and $\ol{\ta\ga}$ is admissible.
Then the part of the plane separated from $W$ by $\ol{R_\ta\cup R_\ga}$ must
contain a critical point. This implies one of our claims: \emph{if no critical point is
separated in the Julia set from a fixed Cremer point by a point of $B$ then the
polynomial is basic uniCremer.}

On the other hand, it is easy to see that if $p$ is a Cremer fixed point then
there must exist a critical point $c$ which is not separated from $p$ by points
of $B$. Indeed, otherwise there is a set of points $X=\{x_1, \dots, x_m\}$ and
a set of rays  landing at them  $Z=\{\tilde{R'}_1, \tilde{R''}_1, \dots,
\tilde{R'}_m, \tilde{R''}_m\}$ such that the component $M$ of $\C\sm \cup_i
(\tilde{R'}_i\cup \tilde{R''}_i)$ containing $p$ contains no critical points.
Choose an equipotential curve $L$ intersecting the rays at points close to the
appropriate points of $X$. Then construct small arcs connecting points of
intersection of $L$ with $\tilde{R'}_i, \tilde{R''}_i$ outside $M$. The thus
constructed curve $S$ encloses a simply connected region $U$ whose closure
contains no critical points. By \cite{pere94, pere97} then the so-called
\emph{hedgehog} $H\subset J$ created inside $U$ will reach out to the boundary
of $U$. This implies that at least one point of $X$ is a cutpoint of $H$
contradicting \cite{pere94, pere97}. Thus, \emph{if a polynomial has a
\textbf{unique} critical point and a \textbf{fixed} Cremer point then it is
basic uniCremer}; this gives another example of basic uniCremer polynomials.

Let us go back to the general case. For a critical point $c\nin W\cap J$ there
are two angles $\ga\ne \ta\nin W$ with $c\in \imp(\ga)\cap \imp(\ta)$ and
$\si(\ga)=\si(\ta)$. Since $c$ is cut off $W$ by admissible cuts, the component
$A_{i_c}$ of $\uc\sm T$ to which $\ga, \ta$ belong, is well-defined. Let $\ell_{i_c}$ be the leaf
in $\K$ whose endpoints coincide with the endpoints of $A_{i_c}$. For each
$i_c$ choose $r_c$ as the smallest such number that $\si^{r_c}(\ell_{i_c})\in
\Hc$. Given $\bs\in \Hc$, let $H_{\bs}$ be the set of all critical points
$c\nin W\cap J$ with $\hsi^{r_c}(\ell_{i_c})=\bs$.

We construct the simple closed curve $S$ needed to find a polynomial-like
map. Let $\bs=\ol{\ta\ga}\in \Hc$ be admissible. Then $R_\ta=R'_1, R_\ga=R''_1$ land at
$z_{\bs}$. Since $z_{\bs}$ is a repelling fixed point we can choose a very
small neighborhood $U_{\bs}=U_1$ of $z_{\bs}$ in which points are repelled from
$z_{\bs}$. Moreover, by  assumption, critical points of $P$ are not mapped
into repelling periodic points. Hence we can choose $U_{\bs}$ so small that
$P^{r_c}(c)\nin U_{\bs}$ for all $c\in H_{\bs}$. Do this for all admissible
$\bs\in \Hc$.

Now consider $\bs\in \Hc$ which is not admissible. For $b\in H_{\bs}$ choose an
admissible chord $\ell_b=\ol{\al\be}$ with $R_{\al}=R'_2, R_{\be}=R''_2$ very
close to $\ell_{i_b}=\ol{\la\xi}$ with $R_{\la}=R'_3, R_{\xi}=R''_3$. We can
choose the closeness so that $\cut_{\ell_b}=\cut_1$ separates $b$ from $W$ and
then consider both $P^{r_b}(\cut_1)$ (which by the construction is located
outside $W$) and $\si^{r_b}(\ell_b)$ (which by the construction is an
admissible chord outside $T$). Then we choose an admissible chord
$\ell'_{\bs}=\ol{\al_{\bs}\be_{\bs}}$ so close to $\bs$ that it separates all
chords $\si^{r_b}(\ell_b), b\in H_{\bs}$ from $T$ (this is possible since $\bs$
is not admissible and hence is a limit of admissible chords).
Figure~\ref{unicfig1} illustrates this constriction.

\begin{figure}\refstepcounter{figure}\label{unicfig1}\addtocounter{figure}{-1}
\begin{center}
\includegraphics[width=3.5truein]{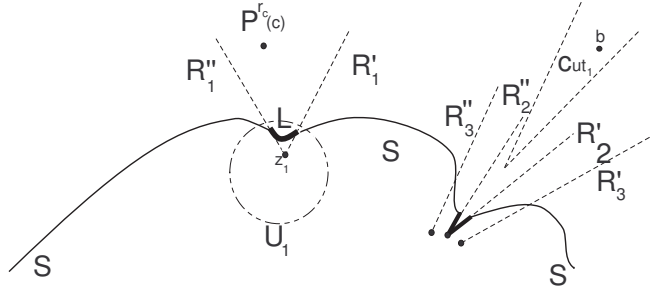}
\caption{Construction of the curve $S$}
\end{center}
\end{figure}

Then choose an equipotential $Q$ which intersects all rays $R_\ta, R_\ga$ taken
over all admissible chords $\bs=\ol{\ta\ga}\in \Hc$ inside $U_{\bs}$ at points
$x_{\bs}$ and $y_{\bs}$. Connect $x_{\bs}$ and $y_{\bs}$ with a small curve
$L_{\bs}=L\subset U$ which is repelled away from $z_{\bs}$ inside the
corresponding wedge between $R_\ta$ and $R_\ga$ and replace the piece of the
equipotential between $R_\ta$ and $R_\ga$ by $L$. Also, this equipotential
intersects all rays $R_{\al_{\bs}}, R_{\be_{\bs}}$ taken over non-admissible
chords $\bs\in \Hc$. In this case we replace the piece of the equipotential $Q$
between the rays $R_{\al_{\bs}}, R_{\be_{\bs}}$ by the union $L_{\bs}$ of
pieces of these rays between the point of their intersection with $Q$ and their
common landing point. Thus, the constructed curve $S$ consists of pieces of the
equipotential $Q$ and either small curves $L_{\bs}$ (taken over all admissible
chords $\bs\in \Hc$) and unions $L_{\bs}$ of pieces of rays $R_{\al_{\bs}},
R_{\be_{\bs}}$ leading to their common landing point (taken over all
non-admissible chords $\bs\in \Hc$).

Denote by $H$ the Jordan disk enclosed by $S$. It follows from the construction
that $P$ maps the curve $S$ outside $\ol{H}$. Then we can consider the pullback
$V$ of $\ol{H}$ under $P$ defined by $P(p)=p$. Clearly, $P:V\to H$ is a
polynomial-like map. Moreover, the construction and standard arguments imply
that the set of all points which never exit $\ol{H}$ under $P$ is exactly $W$:
this can be observed on the disk model first and then transported to the
dynamical plane using the construction, the fact that all periodic points are
repelling, and the fact that so are all periodic angles on the circle. Thus,
$W$ is the filled-in Julia set of the polynomial-like map $P:V\to H$, and
$J\cap W$ is its Julia set.

Denote by $f$ the polynomial to which $P|_V$ corresponds. By \cite{dh85b} there
is a quasi-conformal conjugacy $\psi$ between $P|_{J\cap W}$ and $f|_{J_f}$
(we will use terms ``$P$-ray'' and ``$f$-ray'' in the obvious sense). Notice
that $J_f$ cannot have Fatou domains (or other Cremer points) because by
\cite{gm93, kiwi00} all such sets are cut off $W$. Hence $J_f$ is a
non-separating continuum. It is easy to see that $q=\psi(p)$ is Cremer: there are
arbitrarily small invariant subcontinua in $J\cup W$ containing $p$ (hedgehogs
\cite{pere94, pere97}), hence the corresponding continua exist for $f$ which
excludes that $q$ is repelling.

Suppose that $f$ has a periodic repelling cutpoint $a=\psi(b)$ and set
$\psi(p)=q$. Take $m$ so that $f^m(a)=a$. Then $P^m(b)=b$. By the construction
of $W$ and $T$, there exists an $P^m$-invariant $P$-ray landing at $b$. Then
its $\psi$-image will be an invariant (in a neighborhood of $a$) curve in the
complement of $J_f$ landing at $a$. This implies that all $f$-rays landing at
$a$ must be $f^m$-invariant too (there is no rotation around $a$ in the
$f$-plane). Denote by $A_m$ all $f^m$-fixed points; similarly we see that each
such point (except perhaps for $q$) has an $f^m$-invariant $f$-ray landing at
it. Consider the closure $K$ of a component of $J_f\sm \{A_m\}$ not containing
$q$. Then by \cite{gm93} (or by Corollary~\ref{degimp1} applied to $K$) we see
that there exists an $f^m$-fixed point in $K$ which does not belong to $A_m$, a
contradiction.

\section{Fixed points and impressions}\label{fip}

We begin by describing a corollary of \cite{bo08} which serves
as our major topological tool. Let $X\subset K$ be a non-separating
continuum or a point such that:

\begin{enumerate}

\item Pairwise disjoint continua/points $E_1\subset X, \dots, E_m\subset X$
and finite sets of angles $A_1=\{\al^1_1, \dots, \al^1_{i_1}\}, \dots, A_m=\{\al^m_1,
\dots, \al^m_{i_m}\}$ are given.

\item We have $\Pi(A_j)\subset E_j\subset \imp(A_j)$ (so the set
$E_j\cup (\cup^{i_j}_{k=1} R_{\al^j_k})=E'_j$ is closed and connected).

\item $X$ intersects a unique component $A$ of $\C\sm \cup E'_j$
and $X$ is equal to the union of all $E_j$ and $K\cap A$.

\end{enumerate}

We call such $X$ a \emph{general puzzle-piece} and call the continua $E_i$ the
\emph{exit continua} of $X$. For each $j$, the set $E'_j$ divides the plane
into $i_j$ open  wedges; denote by $W_j$ the one which contains $X\sm E_i$.
Also, let $H_j$ be the chord connecting the angles whose rays form a part of
the boundary of $W_j$, and let $T_j$ be the component of $\disk\sm H_i$ taken
from the side corresponding to $X$ (so that angles whose rays have impressions
in $X$ are belong to $T_j$). Then the intersection $T$ of all sets $T_j$ is a
convex subset of $\disk$ whose boundary consists of chords $H_j$ and some arcs
of the circle. It follows that the impressions of all angles in $T\sm \cup A_i$
are contained in $X$.

\begin{corollary}\cite{bo08}\label{degimp1}
Suppose that for a non-separating continuum $X$ or a point
at least \emph{one} of the following holds.

\begin{enumerate}

\item\label{1} $X$ is a general puzzle-piece with exit continua $E_1,
\dots, E_m$ such that for every $i$ either $E_i$ is a fixed point or
$P(E_i)\subset W_i$.

\item \label{2}$X\subset J$ is an invariant continuum.

\end{enumerate}

If all fixed points which belong to $X\cap J$ are repelling and all rays
landing at them are fixed then  $X$ is a repelling fixed point. Hence, if
$X\subset K$ is a non-degenerate continuum which satisfies (\ref{1}) or
(\ref{2}), then either $X$ contains a non-repelling fixed point or $X$ contains
a repelling fixed point at which a non-fixed ray lands.
\end{corollary}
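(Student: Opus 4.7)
The plan is to derive both (\ref{1}) and (\ref{2}) from the main topological fixed point theorem of \cite{bo08}, which I treat as a black box: under appropriate dynamical-boundary conditions on a non-separating plane continuum, one obtains a fixed point of a prescribed ``non-trivial'' type (either non-repelling, or repelling but with some non-fixed ray landing at it).

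First, I would verify that each of the two cases fits the hypothesis format of \cite{bo08}. In case (\ref{1}), the assumption that each exit continuum $E_i$ is either fixed or satisfies $P(E_i)\subset W_i$ is precisely the ``inward-attracting'' boundary behavior (relative to $X$) that the \cite{bo08} theorem is designed to handle: the exit continua do not escape to the wrong side of their boundary rays, so $X$ behaves invariantly with respect to its boundary combinatorics. The theorem thus applies verbatim, producing a fixed point $q\in X\cap J$. In case (\ref{2}), $X\subset J$ is outright $P$-invariant, which one may view as a degenerate puzzle-piece (empty collection of exit continua, with $A$ the unique ``wedge''); the hypotheses of case (\ref{1}) are vacuously satisfied, and the theorem applies again. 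Alternatively, the version of \cite{bo08}'s result for invariant continua applies directly.

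Having obtained $q\in X\cap J$, I would extract from \cite{bo08} the sharper structural information that $q$ is either non-repelling, or is a repelling fixed point at which some non-fixed ray lands. Equivalently, assuming that every fixed point in $X\cap J$ is repelling and that all rays landing at such points are fixed, we must conclude $X=\{q\}$, which is the first assertion. The concluding ``hence'' statement is just the contrapositive: a non-degenerate $X$ satisfying (\ref{1}) or (\ref{2}) must either contain a non-repelling fixed point or contain a repelling fixed point at which a non-fixed ray lands.

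The main obstacle is that all the substantive work is hidden in \cite{bo08}, where the fixed point theorem is presumably proved using a prime-end rotation number or index computation in the spirit of Cartwright--Littlewood, combined with a careful analysis of which rays may land at the resulting fixed point. My task here is thus limited to checking that the hypotheses of (\ref{1}) and (\ref{2}) match the format of the \cite{bo08} theorem and to drawing the landing-ray dichotomy from its conclusion; the potentially delicate piece is the reduction of case (\ref{2}) to the puzzle-piece format, which requires ensuring that treating an invariant $X$ with no external rays structure is compatible with the boundary-dynamical hypothesis of the black-boxed theorem.
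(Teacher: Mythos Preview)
Your proposal is correct and matches the paper's treatment: the paper does not prove this corollary at all but simply imports it from \cite{bo08} as a black box, exactly as you do. Your additional remarks on how cases (\ref{1}) and (\ref{2}) fit the hypothesis format of \cite{bo08} go slightly beyond what the paper bothers to say, but the approach is the same.
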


By Corollary~\ref{degimp1} if $P$ is a basic uniCremer polynomial then
a $P^m$-invariant continuum or a point $K\subset J$, not-containing
the Cremer point $p$ of $P$, is a point.

We now prove a few lemmas which can be of independent interest.
Mostly they deal with  topological properties of impressions. In this
subsection $X$ is a non-separating one-dimensional plane continuum. The
assumption that $X$ is non-separating and one-dimensional is not essential but simplifies the
arguments (e.g., in this case each subcontinuum of $X$ is also a
non-separating, one-dimensional plane continuum and the intersection of any two
subcontinua of $X$ is connected). Speaking of \emph{points} we mean points in
the (dynamic) plane while \emph{angles} mean arguments of external rays.

A \emph{crosscut} $C$ of $K$ is an open arc $C\subset \Complex\sm K$ whose closure is a closed arcs
with endpoints in $K$.  By $Sh(C)$, the \emph{shadow of $C$}, we denote the bounded component
of $\Complex\sm [K\cup C]$. Let $R_\al$ be
an external ray and $\{C_n\}$ a family  of crosscuts all of  whom cross $R_\al$
essentially and are such that $\dia(C_n)\to 0$. Then  it is well known that the impression
 $\imp(\al)=\cap_n \ol{Sh(C_n)}$ .  A continuum $K$ is said to be
\emph{decomposable} if there exist two \emph{proper} subcontinua $A, B\subsetneqq
K$ such that $A\cup B=K$ and \emph{indecomposable} otherwise.
Theorem~\ref{indec} holds for \emph{all} polynomials.

\begin{theorem}\cite[Theorem 1.1]{cmr05}\label{indec} The Julia set of a polynomial $P$ is
indecomposable if and only if there exists an angle $\ga$ whose impression has
non-empty interior in $J(P)$; in this case the impressions of \emph{all} angles
coincide with $J(P)$.
\end{theorem}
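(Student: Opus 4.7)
My plan is to establish the equivalences via a cyclic chain: (b) some impression has nonempty interior in $J$ $\Rightarrow$ (c) all impressions equal $J$ $\Rightarrow$ (a) $J$ is indecomposable $\Rightarrow$ (b). The four ingredients are the Böttcher relation $P(\imp(\al)) = \imp(d\al)$, topological exactness of $P|_J$, upper semicontinuity of the map $\al\mapsto\imp(\al)$, and a classical prime-end result for indecomposable plane continua.

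For (b) $\Rightarrow$ (c), suppose $\imp(\ga)$ has nonempty interior $U$ in $J$. Topological exactness of $P|_J$ yields an $n$ with $P^n(U) = J$, so $\imp(\si^n\ga) = P^n(\imp(\ga))$ contains $J$ and hence equals $J$. Set $S = \{\ta\in\uc : \imp(\ta) = J\}$; this set is closed by upper semicontinuity and forward $\si$-invariant since $\imp(\si\ta) = P(\imp(\ta))$. For backward invariance I would use the identity $\imp(\si\ta) = \bigcup_{\si\ta_i=\si\ta} P(\imp(\ta_i))$, obtained by lifting shrinking crosscut sequences through the $d$-to-$1$ local structure of $P$ away from the critical set: if $\si\ta\in S$ then $J$ is a finite union of the subcontinua $P(\imp(\ta_i))$, so Baire category gives some $\imp(\ta_i)$ with nonempty interior in $J$, and the preceding forward step places a $\si$-iterate of $\ta_i$ back into $S$. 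Iterating, $S$ contains a $\si$-backward orbit dense in $\uc$, and closedness forces $S = \uc$.

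For (c) $\Rightarrow$ (a), suppose for contradiction $J = A\cup B$ is a decomposition into proper subcontinua. Then $A\sm B$ is a nonempty open subset of $J$; since landing points of rational rays are dense in $J$ (they land at a dense set of repelling and parabolic (pre)periodic points), there is an angle $\ta$ whose ray $R_\ta$ lands at a point $x\in A\sm B$. Because $B$ is closed and misses $x$, sufficiently small crosscuts along $R_\ta$ near $x$ have shadows whose intersection with $J$ lies entirely in $A\sm B$, so $\imp(\ta)\subseteq A\subsetneq J$, contradicting (c). For (a) $\Rightarrow$ (b), I would invoke the classical Rutt--Mazurkiewicz--Rogers theorem for indecomposable plane continua: if $X = \partial U$ is indecomposable with $U$ a simply connected domain, the set of prime ends of $U$ whose impression equals $X$ is a dense $G_\da$ in the prime-end space. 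Transported to $\uc$ via the Böttcher isomorphism $\Psi$, this yields an angle $\ga$ with $\imp(\ga) = J$, which trivially has nonempty interior in $J$.

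The main obstacle is the backward-invariance step for $S$: the Baire argument only delivers some $\imp(\ta_i)$ with nonempty interior in $J$, so propagating this to a genuine element of $S$ requires iterating the forward exactness step rather than applying it once, and one must verify that the resulting collection of $\si$-preimages inside $S$ is in fact dense in $\uc$. A secondary concern is pinning down the precise version of the classical indecomposable-continuum theorem invoked for (a) $\Rightarrow$ (b), as the literature contains several formulations with slightly different hypotheses about the ambient domain.
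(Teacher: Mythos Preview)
The paper does not prove this theorem; it is quoted without proof from \cite{cmr05} and used as a black box, so there is no argument in the paper to compare your attempt against.

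Regarding your sketch itself, there are two genuine gaps. In (c) $\Rightarrow$ (a) you assert that if $R_\ta$ lands at $x\in A\sm B$ then sufficiently small crosscuts along $R_\ta$ near $x$ have shadows meeting $J$ only inside $A\sm B$, hence $\imp(\ta)\subset A$. This inference fails: a small crosscut can have an enormous shadow (picture a crosscut near the tip of a spiral, whose shadow swallows the entire spiral), and the impression of a landing ray can be far larger than its principal set $\{x\}$. Landing at $x$ gives $\Pi(\ta)=\{x\}\subset A$, not $\imp(\ta)\subset A$. In (b) $\Rightarrow$ (c), the identity you display is vacuous as written, since each $P(\imp(\ta_i))$ already equals $\imp(\si\ta)$; the identity you actually need is $P^{-1}(\imp(\be))=\bigcup_{\si\ta_i=\be}\imp(\ta_i)$, which does follow from lifting nested crosscut chains and does feed the Baire step. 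Even then, as you correctly flag, Baire only yields some $\imp(\ta_i)$ with nonempty interior, not $\imp(\ta_i)=J$, and you have not shown how iterating forward exactness produces a \emph{dense} backward orbit inside $S$. One clean repair closes both gaps at once: prove (b) $\Leftrightarrow$ (a) first (forward exactness one way, the Rutt--type prime-end theorem the other), and then use indecomposability to get (a) $\Rightarrow$ (c): in an indecomposable continuum every proper subcontinuum is nowhere dense, so once $J=\bigcup_i \imp(\ta_i)$ the Baire step forces some $\imp(\ta_i)=J$ outright, giving honest backward invariance of $S$ and hence $S=\ucirc$.
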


From the topological standpoint, if the Julia set is indecomposable then one
cannot use impressions to further study its structure: representing $J(P)$ as
the union of smaller more primitive continua is impossible in this case.
Besides, if $J(P)$ is indecomposable then the results of this paper are
immediate. Thus, in the lemmas below we assume that no impression has interior
in $X$.

By $\ch(A)$ we
denote the convex hull of a planar set $A$. Lemma~\ref{jic} studies how ray
continua intersect. If two sets $A_1, A_2\subset \ucirc$ are such that $\ch(A_1)\cap
\ch(A_2)=\0$ then they are said to be \emph{unlinked}.

\begin{lemma}\label{jic} Suppose that $K_1, K_2$ are disjoint ray continua connected
to finite sets of angles $A_1, A_2$ respectively. Then $A_1$ and $A_2$ are unlinked.
\end{lemma}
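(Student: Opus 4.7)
The plan is to argue by contradiction. Suppose $\ch(A_1)\cap\ch(A_2)\ne\0$; since $A_1,A_2$ are finite, one can choose $\al_1,\be_1\in A_1$ and $\al_2,\be_2\in A_2$ appearing in alternating cyclic order on $\ucirc$, so that the chords $\ol{\al_1\be_1}$ and $\ol{\al_2\be_2}$ cross inside $\D$. For $i=1,2$ set $X_i=K_i\cup \ol{R_{\al_i}}\cup \ol{R_{\be_i}}$; each $X_i$ is connected because $\Pi(\al_i),\Pi(\be_i)\subset K_i$, and $X_1\cap X_2=\0$ since $K_1\cap K_2=\0$, distinct rays are disjoint, and each ray lies in $\C\sm K$ which is disjoint from $K_1\cup K_2$.

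The heart of the proof is the \emph{separation claim}: $R_{\al_2}$ and $R_{\be_2}$ lie in different components of $\C\sm X_1$. Once this is established, $X_2$ is connected, disjoint from $X_1$, and meets both components of $\C\sm X_1$ through $R_{\al_2}$ and $R_{\be_2}$, a contradiction.

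To prove the separation claim I pass to the sphere $\hc$ and collapse $K_1$. Being a sub-continuum of the standing non-separating continuum, $K_1$ is itself non-separating in $\hc$, so by R.~L.~Moore's decomposition theorem the quotient $q\colon \hc\to \hc/K_1$ is a continuous surjection onto a topological sphere and $q$ restricts to a homeomorphism on $\hc\sm K_1$. The image $q(X_1\cup\{\iy\})$ is the union of the two arcs $q(\ol{R_{\al_1}}\cup\{\iy\})$ and $q(\ol{R_{\be_1}}\cup\{\iy\})$, each joining $q(K_1)$ to $\iy$ and meeting the other only at these two endpoints, hence a Jordan curve. By the Jordan curve theorem it cuts $\hc/K_1$ into two open disks, whose $q$-preimages are the two components $U_1,U_2$ of $\hc\sm(X_1\cup\{\iy\})=\C\sm X_1$. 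To identify which component a ray lies in I use asymptotics: $\Psi$ is tangent to the identity at $\iy$, so $R_\ga$ approaches $\iy$ along asymptotic direction $\ga$; near $q(\iy)$ the Jordan curve arrives along directions $\al_1$ and $\be_1$, dividing a small punctured neighborhood of $q(\iy)$ into two sectors matching the two arcs of $\ucirc\sm\{\al_1,\be_1\}$. Thus for $\ga\notin\{\al_1,\be_1\}$ the whole ray $R_\ga$ (by connectedness) lies in the component of $\C\sm X_1$ corresponding to the arc containing $\ga$. Since $\al_2,\be_2$ are in different arcs by the alternating order, $R_{\al_2}$ and $R_{\be_2}$ lie in different components, establishing the separation claim.

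The main technical hurdle is the two-component structure of $\C\sm X_1$; the Moore-quotient reduction to a Jordan curve is the cleanest route, but one can alternatively work in the Carath\'eodory prime-end compactification of $\hc\sm K_1$ (a closed topological disk), where $\ol{R_{\al_1}}\cup\{\iy\}\cup\ol{R_{\be_1}}$ is realized as a simple boundary-to-boundary arc through the interior point $\iy$, which separates the disk by the Jordan arc theorem.
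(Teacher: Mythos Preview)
Your argument is correct and is essentially the paper's two-line proof with the topology made explicit: the paper simply asserts that if $A_1,A_2$ are linked then ``clearly'' $K_1\cap K_2\ne\0$, and your Moore-quotient reduction to a Jordan curve is one clean way to supply exactly that separation fact. One small omission: before selecting alternating angles you should first note (as the paper does) that $K_1\cap K_2=\0$ forces $A_1\cap A_2=\0$, since a shared angle $\ta$ would give $\0\ne\Pi(\ta)\subset K_1\cap K_2$; without this, linked convex hulls could in principle meet only at a common vertex and admit no alternating quadruple.
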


\begin{proof}Since $K_1\cap K_2=\0$,  $A_1\cap A_2=\0$. If $A_1$ and $A_2$ are not unlinked,
then there exists $\al_1,\al_2\in A_1$ such that $A_2$ separates $\al_1$ and
$\al_2$. This clearly implies that $K_1\cap K_2\ne\0$, a contradiction.
\end{proof}

Recall that a continuum $X$ is \emph{connected im
kleinen at $x$} if for each open set $U$ containing $x$ there exists a
connected set $C\subset U$ with $x$ in the interior of $C$. A continuum $X$ is
\emph{locally connected} at $x\in X$ provided for each neighborhood $U$ of $x$
there exists a connected and open set $V$ such that $x\in V\subset U$. Observe
that sometimes different terminology is used (see the discussion in
\cite{bo06}).

Lemma~\ref{condklei} gives a sufficient condition for a continuum $X$ to be
connected im kleinen at some point $x$. It was proven in \cite{bo06}, Theorem
3.5, so we give it here without a proof. The idea is to establish ``short
connections'' among impressions which cut $x$ off the rest of $X$ and apply it
to prove that $X$ is connected im kleinen at some points. Recall that if $\imp(\ta)$ is
disjoint from all other impressions then we call $\ta$ a \emph{K-separate}
angle.

\begin{lemma}\label{condklei} Suppose that $\ta$ is a K-separate angle and
$\imp(\ta)=\{x\}$ is a singleton. Then arbitrarily close to $\ta$ there are
angles $s<\ta<t$ such that $\imp(s)\cap \imp(t)\ne \0$. Also, $J$ is connected
im kleinen at $x$.
\end{lemma}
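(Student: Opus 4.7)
The plan is to use a crosscut analysis at $x$, with K-separateness of $\ta$ playing the decisive role. Before tackling either claim I would record a preliminary fact: by upper semicontinuity of impressions and $\imp(\ta)=\{x\}$, for every neighborhood $U$ of $x$ there is an open arc $I\ni\ta$ with $\imp(\al)\subset U$ for all $\al\in I$; and since $\ta$ is K-separate, $d(\imp(\al),x)>0$ for every $\al\ne\ta$, so a compactness argument on $\ucirc\sm I$ gives a positive lower bound on $d(\imp(\al),x)$ for $\al$ bounded away from $\ta$. Thus only impressions of angles close to $\ta$ come close to $x$ in the plane.

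For the first assertion I would argue by contradiction. Assume that for some $\e>0$, $\imp(s)\cap\imp(t)=\emptyset$ for every $s\in(\ta-\e,\ta)$ and $t\in(\ta,\ta+\e)$. Shrinking $\e$ and choosing a neighborhood $V$ of $x$ by the preliminary fact, I may assume every impression meeting $V$ has angle in $(\ta-\e,\ta+\e)$. Then $L=\bigcup_{s\in[\ta-\e,\ta]}\imp(s)$ and $R=\bigcup_{t\in[\ta,\ta+\e]}\imp(t)$ are compact (by upper semicontinuity of impressions over a compact arc) and meet only at $x$. Take a tiny crosscut $C$ of $K$ in $V$ that crosses $R_\ta$ essentially; its endpoints $a,b\in J\cap V=L\cup R$ are accessible points, principal to rays with angles straddling $\ta$, and as $\dia(C)\to 0$ they tend to $x$. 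The contradiction comes from the Carath\'eodory correspondence between endpoints of shrinking crosscuts and principal sets of rays: the crosscut approaches its endpoints from both sides of $R_\ta$, which in the nested sequence forces either an endpoint to be a principal point of rays from both sides of $\ta$ (directly yielding the desired $s,t$) or $L\sm\{x\}$ and $R\sm\{x\}$ to meet arbitrarily close to $x$ through a limit argument on crosscut endpoints.

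For the second assertion I would use the first directly. Given a neighborhood $U$ of $x$, choose $s<\ta<t$ so close to $\ta$ that $\imp(\al)\subset U$ for every $\al\in[s,t]$ and, by the first assertion, $\imp(s)\cap\imp(t)\ne\emptyset$. The set $D=\bigcup_{\al\in[s,t]}\imp(\al)$ is a subcontinuum of $J$ contained in $U$: it is closed by upper semicontinuity, and connected because $\imp(s)$ and $\imp(t)$ meet, and the $[s,t]$-parametrized family of impressions forms a standard prime-end fan passing through $\imp(\ta)=\{x\}$. By the preliminary fact, $D$ contains $J\cap B(x,r)$ for some $r>0$, so $x$ lies in the $J$-interior of $D$, establishing connectedness im kleinen of $J$ at $x$. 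The hard part is the separation step in the first claim --- ruling out that the left and right impression families near $\ta$ accumulate at $x$ in parallel without sharing a point --- and this is precisely the technical step for which the reader is referred to \cite[Theorem 3.5]{bo06}.
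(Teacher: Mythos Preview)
The paper itself offers no proof of this lemma; it simply cites \cite[Theorem~3.5]{bo06}. Your proposal is therefore not competing with a proof in the present paper but rather outlining what the cited argument might contain, and you correctly defer the decisive separation step in the first assertion to that same reference.

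Your argument for the second assertion, however, has a genuine gap independent of that deferral: connectedness of $D=\bigcup_{\al\in[s,t]}\imp(\al)$ is asserted via a ``standard prime-end fan'' but not established, and the fact that $\imp(s)$ meets $\imp(t)$ does not by itself link the intermediate impressions to either of them --- a union of continua indexed by an arc need not be connected. This can be repaired, and the dependence on the first assertion removed entirely, by working instead with $E=\bigcap_{R\searrow 1}\overline{\Psi(W_R)}$, where $W_R=\{re^{2\pi i\al}:1<r\le R,\ \al\in[s,t]\}$. This is a nested intersection of continua, hence a continuum; a short diagonal argument gives $E\subset D\subset U$, while $E\supset\imp(\al)$ for every $\al\in(s,t)$ is immediate. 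Since K-separateness forces $\imp(s)$ and $\imp(t)$ to lie at positive distance from $x$, your preliminary fact then yields $\da'>0$ with $J\cap B(x,\da')\subset\bigcup_{\al\in(s,t)}\imp(\al)\subset E$, which is exactly connectedness im kleinen at $x$.
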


On the other hand,  some conditions on $X$ imply that it is nowhere connected im kleinen,
or connected im kleinen at very few points. The next lemma combines Lemma 3.9
and Lemma 3.10 proven in \cite{bo06} and is given here without a proof.

\begin{lemma}\label{cikalt} The following holds.

\begin{enumerate}

\item If $X$ is connected im kleinen at $x$ then for any $\e$ there exists
$\ta$ such that $\imp(\ta)\subset B(x, \e)$ (in particular, there are angles
with impressions of arbitrarily small diameter).

\item Suppose that there exists $\delta>0$
such that for each $\ta\in \ucirc$, $\dia(\imp(\ta))>\delta$. Then $X$ is nowhere
connected im kleinen.

\item Suppose that the intersection $Z$ of all impressions is not empty.
Then the only case when $X$ is connected im kleinen at a point is (possibly)
when $Z=\{z\}$ is a singleton and $X$ is connected im kleinen at $z$.

\end{enumerate}

\end{lemma}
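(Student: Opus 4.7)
The plan is to prove part (1) first and deduce parts (2) and (3) from it by short topological arguments.

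For part (1), I would fix $\e>0$, invoke the connected im kleinen hypothesis to choose a connected set $C\subset X\cap B(x,\e/3)$ with $x$ in the $X$-interior of $C$, and then construct a short crosscut $Q$ of $K$ whose shadow $\sh(Q)$ lies inside $B(x,\e)$ and contains a neighborhood of $x$ in $X$. To build $Q$ I would use density of accessible points in $\partial K$ to pick two accessible points $a,b\in X\sm C$ close to the frontier of $C$ in $X$, follow the rays landing at $a$ and $b$ a short distance, and join their tips by a tiny arc in $A_\iy$. Once $Q$ is in hand, the angles $\ta$ whose rays eventually enter $\sh(Q)$ form an open arc on $\ucirc$, and each such $\ta$ satisfies $\imp(\ta)\subset\ol{\sh(Q)}\subset\ol{B(x,\e)}$. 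The main obstacle is rigorously controlling the diameter of $\sh(Q)$; this uses both the connected im kleinen hypothesis (to keep the ``$X$-side'' of $Q$ small) and the non-separating, one-dimensional character of $X$ (to ensure $Q$ genuinely encloses a neighborhood of $x$ in $X$ rather than wrapping around $K$ the wrong way).

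Part (2) is then immediate by contraposition: if $X$ were connected im kleinen at some $x$, applying (1) with $\e<\delta$ would produce an impression of diameter less than $\delta$, contradicting the hypothesis. For part (3), assume $X$ is connected im kleinen at some $y$ and that $Z=\bigcap_{\ta\in\ucirc}\imp(\ta)\ne\0$. Since $Z\subset\imp(\ta)$ for every $\ta$, every impression meets $Z$. If $y\nin Z$, picking $\e>0$ with $\ol{B(y,\e)}\cap Z=\0$ and applying (1) yields $\ta$ with $\imp(\ta)\subset\ol{B(y,\e)}$, contradicting $Z\subset\imp(\ta)$; hence $y\in Z$. If $Z$ contained a second point $z\ne y$, taking $\e<|y-z|/2$ and applying (1) would force $z\in Z\subset\imp(\ta)\subset\ol{B(y,\e)}$, again a contradiction. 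Hence $Z=\{y\}$.
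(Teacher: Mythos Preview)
The paper does not give its own proof of this lemma; it simply states that the result ``combines Lemma 3.9 and Lemma 3.10 proven in \cite{bo06} and is given here without a proof.'' So there is no in-paper argument to compare against, only your sketch.

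Your reductions of (2) and (3) to (1) are correct and are exactly how one would expect these to go (in (2) you should take $\e<\delta/2$ rather than $\e<\delta$, since $\imp(\ta)\subset B(x,\e)$ only gives $\dia(\imp(\ta))\le 2\e$; this is a trivial fix).

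The real issue is your argument for (1). The outline---produce a crosscut $Q$ with $\ol{\sh(Q)}\cap X\subset B(x,\e)$, then take any angle whose ray enters $\sh(Q)$---is the right strategy, but your proposed construction of $Q$ does not work as written. Choosing accessible points $a,b\in X\sm C$ near the $X$-frontier of $C$ controls their location in the plane but says nothing about the external angles of the rays landing at $a$ and $b$; those angles can be far apart on $\ucirc$, so the ray-tips may lie in different components of $A_\iy\cap B(x,\e)$ and no ``tiny arc in $A_\iy$'' joins them. Even if you do manufacture a short $Q$, the shadow is bounded by $Q$ together with a piece of $X$, and nothing in your sketch forces that piece of $X$ to be small---you flag this as ``the main obstacle'' but do not indicate what argument handles it. (A small side point: $\sh(Q)\subset A_\iy$ is disjoint from $X$, so it cannot literally ``contain a neighborhood of $x$ in $X$''; you presumably mean $\ol{\sh(Q)}$, and in any case this containment is not needed for (1).) The actual proof in \cite{bo06} has to do genuine work here to pass from the connected-im-kleinen hypothesis to a crosscut whose shadow---not just the crosscut itself---is small; your sketch identifies the difficulty but does not overcome it.
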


\section{Wandering continua for uniCremer polynomials}\label{wandecon}

In Section~\ref{wandecon} we use Thurston's invariant geometric laminations
defined in \cite{thur85}. A \emph{geometric lamination} is a compact set $\lam$
of chords in $\ol{\disk}$ and points in  $\ucirc$ such that any two distinct
chords can meet, at most, in an end-point (i.e., the intersection of any two
distinct chords is either empty or a point in $\ucirc$). We refer to a
non-degenerate chord $\ell\in \lam$ as a \emph{leaf} and to a point in $\lam$
as a \emph{degenerate leaf} (by ``leaves'' we mean non-degenerate leaves, and
by ``(degenerate) leaves'' we mean both types of leaves). A degenerate leaf may
be an endpoint of a leaf. If $\ell\cap \ucirc=\{a,b\}$ for a leaf $\ell$, we write
$\ell=ab$. We denote by $\lam^*$ the union of all leaves in $\lam$. Then
$\lam^*\cup \ucirc$ is a continuum. We can extend $\si:\ucirc\to \ucirc$ over
$\lam^*$ by mapping $\ell=ab$ linearly onto the chord $\si(a)\si(b)$ and
denoting this chord by $\si(\ell)$. A \emph{gap} $G$ is the closure of a
complementary domain of $\disk\setminus \lam^*$. A geometric lamination $\lam$
is \emph{$d$-invariant} if $\si$ preserves gaps and leaves of $\lam$ in the
following sense:

\begin{enumerate}

\item (Leaf invariance) For each leaf $\ell\in \lam$, $\si(\ell)$ is a
(degenerate) leaf in $\lam$ and there exist $d$ pairwise disjoint leaves
$\ell_1,\dots,\ell_d$ in $\lam$ such that for each $i$, $\si(\ell_i)=\ell$.

\item (Gap invariance) For each gap $G$ of $\lam$, $\si(\bd(G))$ is a
(degenerate) leaf or the boundary of a gap $G'$ of $\lam$. We denote by
$\si(G)$ the (degenerate) leaf or the gap $G'$, respectively. If
$\si(G)=G'$ is a gap then we also require that $\si|_{\bd(G)}:\bd(G)\to \bd(G')$
be the composition of a positively oriented monotone map and a positively oriented covering map.
\end{enumerate}

Wandering ray continua are defined in Section~\ref{intro}.
One can iterate the set of angles $\A$ connected to a wandering ray continuum;
by Lemma~\ref{jic} the sets $\si^n(\A)$ are unlinked.

Observe that it may happen that $K$ is contained in a larger wandering ray
continuum $K'$. However the growth of the set of angles connected to wandering
ray continua is limited as the following theorem shows (the result is due to Kiwi~\cite{kiwi02}, see
also \cite{bl02}).

\begin{theorem}\label{limwan}\cite{kiwi02} A set of angles such that all its images
under $\si$ are unlinked consists of no more than $2^d$ angles.
\end{theorem}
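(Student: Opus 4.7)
The plan is to exploit the dynamics of $\si$ on the complementary arcs of $\ucirc\setminus\A_n$, where $\A_n=\si^n(\A)$, using the framework of Thurston's invariant geometric laminations introduced earlier in this section.

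First, set $k=|\A|$ and consider the $k$ open complementary arcs $I_1,\dots,I_k$ of $\ucirc\setminus\A$, of lengths $\ell_1,\dots,\ell_k$ with $\sum\ell_i=1$. Since $\ch(\A_n)\cap\ch(\A)=\0$ for every $n\ge 1$, each iterate $\A_n$ must lie in a single $I_{j_n}$. Iterating this observation, for each $n\ge 1$ one obtains a strictly decreasing chain of arcs $I_n^{(0)}\supsetneq I_n^{(1)}\supsetneq\dots\supsetneq I_n^{(n-1)}$, where $I_n^{(m)}$ is the complementary arc of $\ucirc\setminus\A_m$ containing $\A_n$. By Lemma~\ref{jic} the family $\{\ch(\A_n)\}_{n\ge 0}$ then assembles into a $d$-invariant geometric lamination in the sense defined at the start of the section, so Thurston's leaf- and gap-invariance machinery applies to it.

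Next, I would bring in the length dynamics of $\si$. An arc of length less than $1/d$ is mapped bijectively by $\si$ to an arc $d$ times longer, while an arc of length $\ge 1/d$ wraps entirely around $\ucirc$. Since $\sum\ell_i=1$, at most $d$ of the arcs $I_1,\dots,I_k$ can be "long" (length $\ge 1/d$). The key dynamical constraint is that $\si(I_n^{(m)})$ must simultaneously contain $\A_{n+1}$ and be disjoint from every $\ch(\A_j)$ with $j\ne n+1$. Combining these facts, the nested sequence of containing arcs is governed by a finite combinatorial structure whose alphabet has size at most $k$ and whose admissible transitions are restricted by the degree $d$.

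Finally, I would translate this combinatorial rigidity into the bound $|\A|\le 2^d$. A natural approach is to show that each iteration of $\si$ at most doubles the number of admissible configurations (relative positions of $\A_n$ within the nested arc chain), so that after at most $d$ passes through the long arcs one exhausts the $2^d$ bound. The main obstacle is the precise bookkeeping of how the $k$ points of $\A_n$ redistribute under $\si$ among the $d$ fundamental arcs of $\si$ on $\ucirc$; controlling the vertices that straddle these fundamental arcs, and ruling out infinite regress via the fact that the $\ch(\A_n)$ are pairwise disjoint inside $\overline{\disk}$, is the technical heart of the argument. For $d=2$ the bound can be verified directly (Thurston's no-wandering-triangles theorem actually gives the sharper $|\A|\le 2$), and the inductive extension to arbitrary $d$ follows the strategy carried out by Kiwi~\cite{kiwi02} and Blokh--Levin~\cite{bl02}.
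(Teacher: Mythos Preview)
The paper does not prove Theorem~\ref{limwan}: it is quoted as a result of Kiwi~\cite{kiwi02} (with a pointer also to~\cite{bl02}) and used as a black box. There is therefore no proof in the paper to compare your attempt against.

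As for the proposal on its own merits, it is an outline of the circle of ideas but not a proof, and it has genuine gaps. First, the claim that the forward orbit $\{\ch(\A_n)\}_{n\ge 0}$ ``assembles into a $d$-invariant geometric lamination'' is not correct as stated: the definition given in this section requires leaf \emph{backward} invariance (each leaf has $d$ disjoint preimage leaves) and gap invariance, neither of which follows from the forward orbit of a single set being pairwise unlinked. Second, the heuristic that ``each iteration of $\si$ at most doubles the number of admissible configurations'' is never made precise, and you yourself flag the redistribution of vertices among the fundamental arcs as ``the main obstacle'' and ``the technical heart of the argument'' --- that is exactly the content a proof must supply, and here it is absent. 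Third, the final paragraph closes by deferring to~\cite{kiwi02} and~\cite{bl02} for the actual inductive argument, which is circular since those are precisely the references the theorem is already attributed to.

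In short: what you have written is a reasonable description of why one expects a bound exponential in $d$, but the combinatorial core (the counting argument that actually produces $2^d$) is missing, and the lamination framework you invoke does not apply in the form stated.
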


Theorem~\ref{lamexist} shows that wandering ray continua connected to a
non-trivial set of angles give rise to geometric laminations.

\begin{theorem} \label{lamexist} Suppose $P$ is a polynomial of degree $d$
with connected Julia set $J$ which contains a wandering ray continuum $K'$ and
$\A'$ is the set of angles connected to $K'$. Suppose that $|\A'|>1$. Then
there exists $N$ such that the grand orbit of the ray continuum $K=P^N(K')$ is
well-defined and gives rise to a $d$-invariant geometric lamination
$\lam(K')=\lam(K)$ such that $\A'$ is contained in a leaf or a gap of
$\lam(K')$.
\end{theorem}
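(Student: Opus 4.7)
The plan is to construct $\lam(K)$ as the Hausdorff closure of the chord family obtained by collecting the boundary edges of the convex hulls $\ch(\si^n(\A))$ for all $n\ge 0$ together with all of their iterated $\si^{-k}$-preimage chords, after first replacing $\A'$ by a suitably late forward iterate $\A$.

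First I choose the shift $N$. Lemma~\ref{jic} and the wandering hypothesis imply that $\si^m(\A')$ and $\si^n(\A')$ are unlinked in $\ucirc$ whenever $m\ne n$, and Theorem~\ref{limwan} gives $|\A'|\le 2^d$. The positive integer sequence $|\si^n(\A')|$ is non-increasing, hence stabilizes; let $N$ be minimal with $|\si^n(\A')|=|\si^N(\A')|$ for all $n\ge N$, and set $\A:=\si^N(\A')$, $K:=P^N(K')$. Then $\si$ is injective on each $\si^n(\A)$, so each $\ch(\si^n(\A))$ has exactly $|\A|$ vertices, and each preimage set $\si^{-k}(\A)$ decomposes into $d^k$ coherent subsets of size $|\A|$ whose convex hulls sit in pairwise disjoint sheets. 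Note also that $\A\cap\si^n(\A)=\emptyset$ for $n\ge 1$ by the wandering hypothesis, since a shared angle would yield a common principal subset of $K$ and $P^n(K)$.

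Next I construct $\lam(K)$. For each $n\ge 0$ let $E_n$ be the set of boundary edges of $\ch(\si^n(\A))$ and put $\lam_0:=\bigcup_{n\ge 0}E_n$. For each chord $ab\in\lam_0$ and each $k\ge 1$, adjoin to the family all $d^k$ coherent $\si^{-k}$-lifts of $ab$ (each lift pairs endpoints lying in a common component of the $\si^{-k}$-preimage of a short arc containing $ab$). Let $\lam(K)$ be the Hausdorff closure of the resulting chord family together with $\ucirc$. Leaf invariance, gap invariance, and the existence of $d$ pairwise disjoint $\si$-preimages for every leaf are built into the construction provided the non-crossing axiom holds.

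The main obstacle is the non-crossing axiom: no two chords of $\lam(K)$ may meet in $\disk$. For chords in $\lam_0$ this is exactly the unlinkedness of the first step. For coherent pullbacks, disjoint chords in $\disk$ lift under $\si^{-1}$ to disjoint chord families because $\si$ restricts to a homeomorphism on each fundamental arc of $\ucirc$, and this property iterates. A hypothetical crossing between a forward-orbit chord $\ell_1\in E_m$ and a pullback $\ell_2\in\si^{-k}(E_n)$ would, after applying $\si^k$, produce a crossing in $E_{m+k}\cup E_n$, contradicting step one, and Hausdorff closure preserves non-crossing. Finally, $\A'\subset\si^{-N}(\A)$ lies in the single coherent lift determined by $K'$, so the edges of $\ch(\A')$ are leaves of $\lam(K)$; if some leaf of $\lam(K)$ lay strictly inside $\ch(\A')$ then its $\si^N$-image would be a diagonal of $\ch(\A)$, which is impossible because any chord of $\lam(K)$ with both endpoints in $\A$ is forced to be an edge of $\ch(\A)$ (using $\A\cap\si^n(\A)=\emptyset$ for $n\ge 1$ together with the coherent-lift structure). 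Hence $\A'$ lies in a single gap of $\lam(K')=\lam(K)$, or on a single leaf when $|\A'|=2$.
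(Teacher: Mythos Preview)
Your purely combinatorial approach has a genuine gap in the non-crossing argument. The implication ``a crossing between $\ell_1\in E_m$ and a coherent pullback $\ell_2\in\si^{-k}(E_n)$ would, after applying $\si^k$, produce a crossing in $E_{m+k}\cup E_n$'' is false: $\si^k$ is a degree-$d^k$ covering, not a homeomorphism, and linked $4$-tuples on $\ucirc$ can map to unlinked ones. Concretely, if $\ell_1=\overline{ab}$ and $\ell_2=\overline{cd}$ cross, then one endpoint of $\ell_1$ (say $b$) lies in the short preimage arc containing $c,d$, and $\si^k$ preserves the order of $b,c,d$ there; but $a$ lies outside that arc and $\si^k(a)$ may land on the same side of $\si^k(\ell_2)=e$ as $\si^k(b)$, destroying the crossing. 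A related difficulty is that your coherent lifts of $E_1$ need not reproduce $E_0$: already for $d=2$ and $\A=\{\al,\be\}$ with $1/4<\be-\al<1/2$, the coherent $\si^{-1}$-lifts of $\overline{\si(\al)\si(\be)}$ pair $\al$ with $\be+1/2$ and $\be$ with $\al+1/2$, not $\al$ with $\be$. Thus the forward-orbit chords and the coherent pullbacks are two genuinely different families whose compatibility is exactly what is at stake, and your push-forward argument does not establish it. You also do not address the clause ``the grand orbit of the ray continuum $K=P^N(K')$ is well-defined,'' which is part of the conclusion and concerns continua in $J$, not just angle sets.

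The paper's proof fills precisely this gap by never leaving the dynamical plane. The integer $N$ is chosen so that the forward orbit of $K=P^N(K')$ contains no critical point (a different condition from your cardinality stabilization); then Heath's theorem \cite{hea96} shows every pullback $K(m,n,i)$ of $P^n(K)$ is hit injectively by $P^m$, and a short argument gives that any two pullbacks are either equal or disjoint. The angle sets $\Ta(m,n,i)$ are then defined as those angles connected to the actual continua $K(m,n,i)$, and Lemma~\ref{jic} applied to these pairwise disjoint continua yields pairwise unlinkedness of all the $\Ta(m,n,i)$ at once---forward images and pullbacks alike. This dynamical input (disjointness of pullback continua in $J$) is exactly what your combinatorial ``coherent lift'' tries to replace, and it cannot be recovered from circle data alone.
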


\begin{proof}
First we show that any pullback $A$ of any forward iterate of $K'$ is
tree-like. Indeed, $A$ is wandering since so is $K'$. On the other hand, if $A$
is not tree-like then it contains the boundary of a Fatou domain, and by the
Sullivan theorem \cite{sul85} cannot be wandering. Hence all pullbacks of
images of $K'$ are tree-like. Then by a theorem of J. Heath \cite{hea96} the
map $P$ on a pullback $A$ is not one-to-one only if $A$ contains a critical
point of $P$.

Choose $K=P^N(K')$ so that the following holds. Since $K'$ is wandering then
for each critical point $c$ any image of $K'$ may contain only one iteration of
$c$. Choose a forward image $K$ of $K'$ so that for each critical point $c$
either some forward image of $c$ belongs to $K$, or the orbit of $c$ is
disjoint from the orbit of $K$. In particular, $K, P(K), \dots$ do not contain
critical points. Let us show that then there are $d^m$ pairwise disjoint
pullbacks of $P^n(K)$ of order $m\le n$ none of which contains a critical
point. Indeed, suppose that a pullback $A$ of $P^n(K)$ of order $m$ contains
a critical point $c$. Since by the choice of $K$ there exists $i>0$ such that
$P^i(c)\in K$ we then get that $P^n(K)$ contains both $P^{n+i}(c)$ and
$P^m(c)$, a contradiction. By \cite{hea96} this implies that all powers of $P$
restricted onto a pullback $A$ of $P^n(K)$ of order $m\le n$ are one-to-one.

This implies that the pullbacks of images of $K$ are in fact well-defined as
sets: any two pullbacks $A, B$ are either the same or disjoint. Indeed, suppose
that $A$ is a pullback of $P^n(K)$ of order $m$ and $B$ is a pullback of
$P^r(K)$ of order $s$. Suppose that $A\cap B\ne \0$ and show that then $A=B$.
Choose a point $x\in A\cap B$ and consider several cases. For definiteness
suppose that $m>s$. First let us show that $n-m=r-s$. Indeed, $P^m(x)\in
P^n(K)$ and $P^s(x)\in P^r(K)$. Then the latter implies that
$P^m(x)=P^{m-s}(P^s(x))\in P^{m-s+r}(K)$. Hence $P^n(K)$ and $P^{m-s+r}(K)$ are
not disjoint (both contain $P^m(x)$) and hence $n=m-s+r$ because $K$ is
wandering. Now, by the above there is only one pullback of $P^m(K)$ containing
$P^s(x)$, namely $P^r(K)$. Hence $P^s(A)=P^r(K)$ and  $A$ is the pullback of
$P^r(K)$ of order $s$ along the orbit $x, P(x), P^s(x)$. Since the same holds
for $B$ we conclude that $A=B$.

Choose a maximal set of angles $\A$ connected to $K$ and containing
$\si^N(\A')$ (clearly, all angles from $\si^N(\A')$ are connected to $P^N(K)$).
Denote a pullback of $P^n(K)$ by $P$ of order $m$ by $K(m, n, i)$ where
different numbers $i$ correspond to different pullbacks of the same order $m$
of the same image $P^n(K)$ of $K$. In other words, $K(m, n, i)$ is the $i$-th
component of the set $P^{-m}(P^n(K))$. To $K(m, n, i)$ we associate the set of
angles $\Ta(m,n,i)$ which are all the angles from $\si^{-m}(\si^n(\A))$
connected to $K(m, n, i)$. The fact that all sets $K(m,n,i)$ in the grand orbit
of $K$ are pairwise disjoint and the definition of the set of angles connected
to a ray continuum imply that the sets $\Ta(m,n,i)$ have the same property: any
two such sets of angles either coincide or are disjoint.

If $G(m,n,i)=\ch(\Ta(m,n,i))$, then by Lemma~\ref{jic} it follows that all the sets $G(m,n,i)$
are pairwise disjoint. By Theorem~\ref{limwan} all the sets $\Theta(m,n,i)$ are
finite. Hence all $G(m,n,i)$ are pairwise disjoint finite gaps,  leaves and
points mapped onto each other by $\si$ and its powers. Moreover, $\si$
restricted on the sets $G(m, n, i)$ satisfies all the properties described in
the definition of the lamination because this corresponds to the action of the
map $P$ on the plane. Hence $\cup_{m,n,i} G(m,n,i)$ is a $\sigma$-invariant non
compact lamination $\lam(G)$. It follows easily that the closure of this
non-compact lamination is a $\sigma$-invariant lamination $\lam(K)$. Observe
that by the construction $\A'$ is contained in a leaf or a gap, and hence
$\lam(K)$ is non-trivial (because the cardinality of $\A$ is more than $1$).
\end{proof}

We use geometric laminations in the proof of the following theorem.

\begin{theorem}\label{nowander}
Suppose that $J$ is the Julia set of a basic uniCremer polynomial $P$ of degree
$d$. Suppose that $K'\subset J$ is a wandering ray continuum connected to a set
of angles $\A'$ with $|\A'|\ge 2$. Then there exists an $n$
such that $|\si^n(\A')|$ is a singleton and for every $\ta\in \A'$, $\cup_{n\ge 0}
\si^n(\ta)$ is not dense in $\ucirc$. In particular, $K'$ is pre-critical, and if
a point $x\in J$ is bi-accessible then it is either precritical or preCremer.
\end{theorem}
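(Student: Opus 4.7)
The plan is to combine the $d$-invariant geometric lamination produced by Theorem~\ref{lamexist} with the basic-uniCremer consequence of Corollary~\ref{degimp1} to rule out a permanently wandering orbit of angle sets of cardinality at least two. First I would invoke Theorem~\ref{lamexist} to pass to $K=P^N(K')$ and obtain a $d$-invariant geometric lamination $\lam(K)$ inside which $\A:=\si^N(\A')$ sits in a single leaf or gap. The construction in the proof of Theorem~\ref{lamexist} selects $K$ so that its forward orbit $\{P^n(K)\}_{n\ge 0}$ avoids every critical point, so $\si$ restricts to a bijection on the angles connected to each $P^n(K)$; consequently every gap $G_n:=\ch(\si^n(\A))$ has exactly $|\A|$ vertices, and since the continua $P^n(K)$ are pairwise disjoint, Lemma~\ref{jic} forces the $G_n$ to be pairwise unlinked. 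If the first conclusion of the theorem fails then $|\A|\ge 2$ and $\{G_n\}$ is an infinite wandering family of non-degenerate leaves or gaps in $\lam(K)$.

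Next I would argue by contradiction, assuming $|\si^n(\A')|\ge 2$ for every $n$, and accumulate this wandering orbit. Fixing $\al\ne\be$ in $\A$ and passing to a subsequence $n_k$, I would obtain $\si^{n_k}(\al)\to\al^*$, $\si^{n_k}(\be)\to\be^*$ in $\uc$, together with Hausdorff convergence $P^{n_k}(K)\to X\subset J$, so that $\Pi(\al^*)\cup\Pi(\be^*)\subset X$. Let $\Omega:=\bigcap_{N\ge 0}\ol{\bigcup_{n\ge N} P^n(K)}$, a closed forward $P$-invariant subset of $J$ containing $X$. The decisive step is to upgrade the component $Y$ of $\Omega$ with $X\subset Y$ to a $P^m$-invariant continuum for a suitable return time $m$; granted that, Corollary~\ref{degimp1} together with the basic uniCremer hypothesis forces $Y$ either to be degenerate, which is excluded by the two distinct principal sets $\Pi(\al^*),\Pi(\be^*)\subset X\subset Y$, or to contain the Cremer fixed point $p$. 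The second alternative records two external rays whose principal sets both contain $p$, contradicting Lemma~\ref{redwarf1} in the red-dwarf case and, in the solar case, clashing with the wandering of $K$ through the sequence of pairwise disjoint gaps $G_{n_k}\to\ol{\al^*\be^*}$ whose associated principal sets remain bounded away from $p$.

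For the non-density statement I would deploy the same accumulation mechanism: were $\{\si^n(\ta)\}$ dense in $\uc$ for some $\ta\in\A'$, the Hausdorff limits of $P^{n_k}(K')$ would populate every point of $J$, once more producing an invariant continuum from $\Omega$ to which Corollary~\ref{degimp1} applies. Finally, since $|\si^n(\A)|$ can only decrease when a forward iterate of $K$ meets a critical point, the eventual collapse of the angle count forces $K'$ itself to be pre-critical. Applied to the degenerate ray continuum $\{x\}$ for a bi-accessible $x\in J$, the conclusion together with the absence of repelling bi-accessible periodic points in the basic uniCremer setting forces $x$ to be pre-critical when $\{x\}$ is wandering, and pre-Cremer when $x$ is (pre)periodic (the Cremer fixed point $p$ being the only periodic target allowed). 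The principal obstacle is the middle step: upgrading the $\omega$-limit component $Y$ to a continuum that satisfies the hypotheses of Corollary~\ref{degimp1} and then excluding the scenario in which $Y$ contains $p$.
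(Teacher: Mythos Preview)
Your plan diverges substantially from the paper's argument, and the part you yourself flag as the ``principal obstacle'' is in fact a genuine gap that I do not see how to close along the lines you suggest.

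First, the accumulation step is fragile. Even though $|\si^n(\A)|\ge 2$ for all $n$, nothing prevents $\si^{n_k}(\al)$ and $\si^{n_k}(\be)$ from converging to the \emph{same} angle $\al^*=\be^*$; an infinite family of pairwise unlinked leaves can collapse to a point in the Hausdorff limit. So your claim that degeneracy of $Y$ is ``excluded by the two distinct principal sets'' is unjustified. Second, the component $Y$ of the closed invariant set $\Omega$ has no reason to be $P^m$-invariant for any $m$: forward invariance of $\Omega$ does not force components to be periodic, and you supply no mechanism to manufacture a return time. Third, your handling of the alternative $p\in Y$ is incorrect. The containment $p\in Y$ does \emph{not} imply that $p\in\Pi(\al^*)$ or $p\in\Pi(\be^*)$, so no bi-accessibility of $p$ is produced and Lemma~\ref{redwarf1} does not apply; in the solar case your appeal to ``principal sets remaining bounded away from $p$'' is simply asserted. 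Finally, the non-density paragraph (``Hausdorff limits populate every point of $J$, once more producing an invariant continuum'') is not an argument.

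The paper's proof avoids Hausdorff limits of the continua entirely and works instead in the disk model with the lamination $\lam(K)$. For every element $D$ of the grand orbit $\Ga(K)$ one records the half-disk $H_D$ on the side of (the leaf bounding the gap of) $D$ that corresponds to the wedge containing $p$, and sets $H=\bigcap_D H_D$. This $H$ is a convex subset of $\ol{\disk}$ which is either a point of $\uc$, a leaf, or a gap of $\lam(K)$; the set $A(p)$ of angles whose impressions contain $p$ lies in $H'=H\cap\uc$. One then shows $\si(H')\subset H'$ (ruling out partial overlaps by a short case analysis) and that $\partial H$ contains no critical leaf. The expanding nature of $\si$ now forces $\partial H$ to contain a \emph{periodic} leaf $\ell$ (or $H$ to be a fixed point). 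Choosing an element $Q\in\Ga(K)$ whose associated gap is very close to $\ell$ on the side away from $H$, one forms the general puzzle-piece $X$ bounded by $Q$ and the rays through its angle set; Corollary~\ref{degimp1} in its form~(1) (exit continuum mapped outward), not form~(2), then yields a repelling periodic bi-accessible point in $X$, contradicting the basic uniCremer hypothesis. The non-density statement follows by the same puzzle-piece trick once some $\si^m(\A)$ lands on the far side of $\A$ from $p$. The essential idea you are missing is to track, in the lamination, the \emph{complement} of the wandering orbit relative to $p$ rather than the orbit itself; this produces a genuinely invariant combinatorial object (the set $H$) without any limiting procedure in $J$.
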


\begin{proof}
By Theorem~\ref{lamexist}, there exists a $d$-invariant lamination
$\lam(K')$ such that $\A'$ is contained in a gap or leaf of $\lam(K')$. Let $p$
be the fixed Cremer point of $P$.  We can replace $K'$ by $K$ constructed in
Theorem~\ref{lamexist} and replace $\A'$ by the set of all angles $\A$ connected to
$K$. It suffices to prove the theorem for $K$ and $\A$. By way of contradiction
suppose that for some $m>1$ and every $n$ we have $|\si^n(\A)|=m$.
Denote the grand orbit of $K$ under $P$ by $\Ga(K)$; denote the grand orbit of
$\A$ under $\si$ (whose closure is the geometric lamination $\lam(K)$) by $\Ta(\A)$.
It is easy to see that there are no critical leaves on the boundaries of sets from $\Ta(K)$
(a set $T\in \Ga(K)$ may cover a critical point of $P$, but then the corresponding set from
$\Ta(\A)$ will be mapped onto its image in a $k$-to-$1$ fashion so that one can draw
critical chords inside its convex hull, but not on its boundary).

The idea of the proof is to show that in the situation of the theorem one can always find
a continuum $X\subset J$ to which Corollary~\ref{degimp1} applies leading to a contradiction
with the definition of a uniCremer polynomial. To do so, we analyze the
dynamical behavior of $K$ and show that at some moment an image of $K$ will
be mapped ``farther away'' from $p$ by the appropriate power of $P$. To implement this plan
we need the following construction.

For each element $D$ of $\Ga(K)$, the union of
the rays of angles from the appropriate element of $\Ta(\A)$ and $D$ cut the plane
into open wedges one of which contains $p$. Denote by $W_D$ the  union of this  wedge,
its two boundary rays and $D$ (clearly, $W_D$ is closed). Simultaneously
consider the corresponding leaf $\ell_D$ on the boundary of the gap/leaf of $\Ta(K)$
corresponding to $D$ and  the closure $H_D$ of the component of
$\disk\sm \ell_D$ corresponding to angles whose rays are contained in $W_D$.
Then consider the intersection $W$ of sets $W_D$ and the intersection $H$ of
sets $T_D$ over all elements $D$ in the grand orbit of $K$.

By the construction
$H$ is a convex subset of $\disk$ whose boundary consists of leaves of $\lam(K)$ and a nowhere
dense subset $H\cap \uc$ of $\uc$. Since by the construction
no convex hull of a set of $\Ta(\A)$ crosses $H$, either $H$ is a point in $\uc$, or
$H$ is a gap or a leaf of $\lam(K)$. Denote by $A(p)$ the set of all angles whose impressions contain
$p$. Then it follows that $A(p)\subset H\cap \uc=H'$. Clearly, $\si(A(p))\subset A(p)$
and hence $\si(H')\cap H'\supset \si(A(p))$.

Consider first the case when $H=H'$ is a point. Then by the above $H=H'=A(p)$ is a
fixed point of $\si$. Choose an element $T$ of $\Ta(\A)$ close to $H$ and consider
the corresponding element $Q$ of $\Ga(K)$. Then there is
a unique open wedge $N$ - one of the components of $\Complex\sm [Q\cup \bigcup_{\al\in T}R_\al]$ - into
which $Q$ is mapped by $P$. Consider the union $X$ of $Q$ with the intersection of $J$
and $N$. It is easy to check that Corollary~\ref{degimp1} applies to $X$ (with $N$ being its unique
exit continuum). This implies that $X$ contains a $P$-fixed point $a$ with several  rays landing at $a$
which rotate under $P$ (i.e., $a$ is a bi-accessible fixed point),
a contradiction with the definition of basic uniCremer polynomials.

Suppose next that $H$ is a gap or a leaf. Observe that there are no
critical leaves on the boundary of $H$. Indeed, if $\ell\in \partial(H)$ is
a critical leaf then it cannot belong to an element from $\Ta(\A)$, hence it is a limit of such elements.
By the properties of laminations this is impossible in the case when $\si(H)$ is not a point
because then the images of these elements of $\Ta(\A)$ will cross the image of $H$. Now, if $\si(H)$
is a point this point must coincide with $A(p)$ and is fixed. In this case the images of elements of
$\Ta(\A)$ which approximate $\ell$ will cross $H$ itself, again a contradiction.

Let us show (by way of contradiction) that $\si(H')\subset H'$.
By the above $\si(H')$ cannot be a point. Then the fact that $\si(H')\not\subset H'$
implies that $H'$ and $\si(H')$ have one or two points in common. Suppose they
have only one point $a$ in common. Then $a\in \si(A(p))$ and $\si(a)=a$.
For geometric reasons then there is a leaf $\ell\in \partial H$ and there is a leaf
$\ell'\in \partial H\sm \partial (\si(H))$ which have the point $a$ in common.
Elements of $\Ta(\A)$ are disjoint from these leaves because they are wandering;
on the other hand, by the construction of $\lam(K)$ elements of $\Ta(\A)$
must converge to $\ell$ and $\ell'$.
Since for each set $D\in \Ga(K)$ we have $a\in H_D$ (because
$p\in \imp(a)$), hence by the construction \emph{both} leaves $\ell, \ell'$
must belong to $H$, a contradiction.

Suppose that $\si(H')$ and $H'$ have two points in common. Since by the assumption
$\si(H')\not\subset H'$ this means that both $H$ and $\si(H)$ are gaps with a common boundary
leaf $\ell$. Thus, $\ell$ is isolated in $\lam(K)$ and lies on the boundary
of a wandering set from $\Ta(\A)$. On the other hand,
$\si(H)\cap H\supset \si(A(p))\si^2(A(p))$ which means that no endpoint of $\ell$ can
be wandering, a contradiction. Thus, $\si(H')\subset H'$.

Since by the above there are no critical leaves in $\partial H$, there must exist a periodic
leaf $\ell\in \partial H$. This is clear is $H$ is a leaf. Otherwise,
leaves from $\partial H$ map onto one another
and never collapse. Moreover, for each $\ell\in \partial H$ let $M_\ell$ be the arc
with the same endpoints as $\ell$ and disjoint from $H$. Then for distinct leaves from
$\partial H$ their arcs are disjoint, and if the length of the arc
$M_{\ell}$ is smaller than $\frac 1{d+1}$ then the length of the arc $M_{\si(\ell)}$
is greater than that of $M_{\ell}$. Since there cannot be infinitely many such
arcs with bounded from below length, the exists a periodic leaf $\ell\in \partial H$.
Suppose that its period is $m$.

Choose an element $T$ of $\Ta(\A)$ close to $\ell$ and consider
the corresponding element $Q$ of $\Ga(K)$. Then there is
a unique open wedge $N$ - one of the components of $\Complex\sm [Q\cup \bigcup_{\al\in T}R_\al]$ - into
which $Q$ is mapped by $P^m$. Consider the union $X$ of $Q$ with the intersection of $J$
and $N$. It is easy to check that Corollary~\ref{degimp1} applies to $X$ (with $N$ being its unique
exit continuum) and $P^m$. This implies that $X$ contains a $P^m$-fixed point $a$ with several landing rays
which rotate under $P$, a contradiction with the definition of basic uniCremer polynomials.

Finally, if the $\si$-orbit of $\A$ is dense in $\uc$ then clearly we will be able to find such that $m>n$ that
$\si^m(K))$ is separated from $p$ by $K$ united with the rays of the angles from $\A$. Then as before
by Corollary~\ref{degimp1} we get a contradiction with the definition of a uniCremer polynomial.
\end{proof}

Observe that even if the set $\si^n(\A)$ is a point for some $n$, the construction of the sets $W$ and
$H$ goes through and shows that in the basic uniCremer case the dynamical behavior of $K$
(and $\A$) is very specific.

\section{Main results}\label{quadra}

First we establish some facts which may be of independent interest and serve as
an additional motivation for us. As was explained in the Introduction, Kiwi's
results \cite{kiwi97} do not apply to uniCremer polynomials. Still, one could
hope to model (topologically) a basic uniCremer Julia set $J$ by monotonically
mapping $J$ onto a locally connected continuum. It turns out that this is
impossible. In the quadratic case we proved in \cite{bo06b} that a monotone map
of $J$ onto a locally connected continuum collapses $J$ to a point. However the
proofs in \cite{bo06b} rely also upon results of \cite{grismayeover99} not
known for basic uniCremer polynomials. Using a new argument we fill this gap
and prove Theorem~\ref{nomonproj}.

Let us state some results of \cite{bo06b}. An \emph{unshielded} continuum
$K\subset \C$ is a continuum which coincides with the boundary of the unbounded
 component of $\Complex\sm K$. Given an external (conformal) ray $R$, a
crosscut $C$ is said to be \emph{$R$-transversal} if $\ol{C}$ intersects
$\ol{R}$ only once and the intersection is transverse and contained in $C\cap
R$; if $t\in R$ then by $C_t$ we always denote a $R$-transversal crosscut  such
that $C_t\cap \ol{R}=\{t\}$. Given an external ray $R$ we define   the
\emph{(induced) order} on $R$ so that $x<_R y\,(x, y\in R)$ if and only if the
point $x$ is ``closer to $K$ \textbf{on the ray $R$} than $y$''. Given an
external ray $R$, we call a family of $R$-transversal crosscuts $C_t$, $t\in R$
an \emph{$R$-defining} family of crosscuts if for each $t\in R$ there exists a
$R$-transversal crosscut $C_t$ such that $\dia(C_t)\to 0$ as $t\to K$ and
$\sh(C_t)\subset \sh(C_s)$ if $t<_R s$.

\begin{lemma}\cite[Lemma 2.1]{bo06b}\label{dist0} Let $K$ be an
unshielded continuum and $R$ be an external ray to $K$. Then there exists an
$R$-defining family of $R$-transversal crosscuts $C_t$, $t\in R$.
\end{lemma}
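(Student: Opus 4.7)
The plan is to pull the problem back to the disk model via the Riemann map, build a null chain of transversal crosscuts along $R$ by a length-area argument, and then interpolate between consecutive members of the null chain to obtain the required continuous family indexed by $t\in R$.

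Since $K$ is unshielded and a continuum, the unbounded component $U$ of $\C\setminus K$ is a simply connected domain on the Riemann sphere with $\partial U=K$, so there is a conformal isomorphism $\Psi\colon\hc\setminus\overline{\D}\to U\cup\{\infty\}$ with $\Psi(\infty)=\infty$. Under $\Psi$ the ray $R$ corresponds to a radial ray $\{re^{2\pi i\alpha}:r>1\}$ for a unique $\alpha\in\uc$, and each $t\in R$ has a radial parameter $r_t>1$, with $t\to K$ iff $r_t\downarrow 1$ and $t<_R s$ iff $r_t<r_s$. For each $r>1$ let $\tilde\Gamma_r$ be the arc in $\C\setminus\overline{\D}$ of the unique Euclidean circle orthogonal both to $\uc$ and to the radial ray at $re^{2\pi i\alpha}$; a direct computation puts this circle at center $\frac{r^2+1}{2r}e^{2\pi i\alpha}$ with radius $\frac{r^2-1}{2r}$. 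The family $\{\tilde\Gamma_r\}_{r>1}$ is then pairwise disjoint and continuous in $r$, with shadows in $\C\setminus\overline{\D}$ (the lunes between $\tilde\Gamma_r$ and $\uc$) nested and shrinking to $e^{2\pi i\alpha}$. A length-area argument applied to $\Psi$ --- noting $\iint_\Omega|\Psi'|^2\,dA<\infty$ for any bounded neighborhood $\Omega$ of $e^{2\pi i\alpha}$ in $\C\setminus\overline{\D}$, then invoking Fubini and Cauchy--Schwarz on the foliation $\{\tilde\Gamma_r\}$ --- yields a subsequence $r_n\downarrow 1$ along which $\mathrm{length}(\Psi(\tilde\Gamma_{r_n}))\to 0$. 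For these $r_n$ the image $\Psi(\tilde\Gamma_{r_n})$ is rectifiable and extends continuously to an arc $D_n$ whose endpoints are single points of $K$, so $D_n$ is a crosscut of $K$ transverse to $R$ at $s_n=\Psi(r_ne^{2\pi i\alpha})$, with $\dia(D_n)\to 0$ and $\sh(D_{n+1})\subset\sh(D_n)$.

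To extend the family to every $t\in R$, for $r_t\in[r_{n+1},r_n]$ I would choose $\tilde C_{r_t}$ as a transversal arc inside the topological quadrilateral in $\C\setminus\overline{\D}$ bounded by $\tilde\Gamma_{r_n}$, $\tilde\Gamma_{r_{n+1}}$, and two arcs of $\uc$, passing through $r_te^{2\pi i\alpha}$ transverse to the radial ray and depending monotonely on $r_t$ in the nested-shadow sense (for concreteness, a suitable convex combination of parametrizations of $\tilde\Gamma_{r_n}$ and $\tilde\Gamma_{r_{n+1}}$, rescaled to meet the radial ray at the correct height), with endpoints on $\uc$ chosen from the full-measure set on which $\Psi$ has radial limits in $K$. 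Setting $C_t:=\overline{\Psi(\tilde C_{r_t})}$, the arc $C_t$ lies in $\overline{\sh(D_n)\setminus\sh(D_{n+1})}$, whose Euclidean diameter is at most $\dia(D_n)+\dia(D_{n+1})+|s_n-s_{n+1}|$ and therefore tends to zero. The transversality $C_t\cap\overline R=\{t\}$ and the nesting $\sh(C_t)\subset\sh(C_s)$ for $t<_R s$ follow from the corresponding facts about $\tilde C_{r_t}$ in the disk model.

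The main obstacle is the diameter control itself. For general unshielded $K$ the prime end of $U$ at $e^{2\pi i\alpha}$ need not land, and the Euclidean diameters of $\Psi(\tilde\Gamma_r)$ can oscillate wildly in $r$; only the length-area (Beurling-type) estimate forces shrinkage, and only along a sparse subsequence. The interpolation step must then be arranged so that the interpolated arcs stay inside the correct quadrilateral --- thereby inheriting uniform diameter control from the skeleton $\{D_n\}$ --- while simultaneously ensuring that their $\Psi$-images extend to genuine crosscuts terminating at single points of $K$, which is achieved by confining the choice of endpoints on $\uc$ to the full-measure set of prime ends at which $\Psi$ has a radial limit.
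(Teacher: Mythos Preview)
The paper does not actually prove this lemma; it is quoted from \cite{bo06b} and used as a black box, so there is no in-paper argument to compare against.

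Your overall strategy---pull back by the Riemann map, build a null chain $\{D_n\}$ of transversal crosscuts via a length--area argument, then interpolate to a family indexed by all $t\in R$---is the natural one, and the first two steps are fine. The gap is in the interpolation. You claim that the interpolated crosscut $C_t$ lies in $\overline{\sh(D_n)\setminus\sh(D_{n+1})}$ and that this set has Euclidean diameter at most $\dia(D_n)+\dia(D_{n+1})+|s_n-s_{n+1}|$. The containment is correct, but the diameter bound is not: the region $\sh(D_n)\setminus\sh(D_{n+1})$ is the $\Psi$-image of the crescent between $\tilde\Gamma_{r_n}$ and $\tilde\Gamma_{r_{n+1}}$, and a conformal map gives no control on the diameter of such an image. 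Concretely, this region contains the entire segment of $R$ between $s_{n+1}$ and $s_n$, and when the prime end at $\al$ does not land---exactly the case of interest for Cremer Julia sets---that ray segment can have diameter bounded away from zero even though $s_n,s_{n+1}$ are Euclidean-close and both $D_n,D_{n+1}$ are tiny. The boundary of the region also carries pieces of $K$ whose extent is in no way governed by the two bounding crosscuts. So nothing in your construction forces $\dia(C_t)\to 0$ for the interpolated values of $t$: the length--area estimate only produces small crosscuts along a sparse set of radii, and confining $C_t$ to the ambient region between two consecutive ones does not transfer that smallness. To close the gap you must arrange that each individual $C_t$ is small---for instance by running a separate length--area estimate localized near each $t$, or by choosing the skeleton $\{r_n\}$ so densely that the intervening ray segments themselves shrink---rather than appealing to the diameter of the region between $D_n$ and $D_{n+1}$.
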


Now we can prove Theorem~\ref{nomonproj}.

\begin{theorem}\label{nomonproj} Suppose that $P$ is a basic uniCremer polynomial
and $\ph:J\to A$ is a monotone map of $J$ onto a locally connected continuum
$A$ or a point. Then $A$ is a singleton.
\end{theorem}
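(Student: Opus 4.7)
The plan is proof by contradiction via Corollary~\ref{degimp1}: in a basic uniCremer Julia set $J$, any nondegenerate $P^m$-invariant subcontinuum must contain the Cremer fixed point $p$. Assume $|A|\ge 2$; I will construct such a forbidden subcontinuum. Since $A$ is a locally connected nondegenerate continuum, I choose a proper subcontinuum $B\subsetneq A$ with $|B|\ge 2$ and $\ph(p)\notin B$; then monotonicity of $\ph$ makes $F:=\ph^{-1}(B)$ a nondegenerate subcontinuum of $J$ with $p\notin F$. The principal issue is that $F$ carries no a priori dynamical meaning, so I split by the dichotomy of Theorem~\ref{maina1}.

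In the red dwarf case, Lemma~\ref{redwarf1} asserts that every impression of $P$ contains $p$, impressions have $J$-diameter at least some $\e>0$, and $J$ is nowhere connected im kleinen. For every $x\in F$ any impression $\imp(\al)\ni x$ also contains $p$, so $\ph(\imp(\al))$ is a subcontinuum of $A$ meeting both $B$ and a connected open neighborhood of $\ph(p)$ disjoint from $B$ (available by local connectedness of $A$). This gives a uniform lower bound on the $A$-diameter of $\ph(\imp(\al))$ for $\al$ with $\imp(\al)\cap F\ne\emptyset$. Using Lemma~\ref{dist0} to construct crosscuts along rays of such angles, I carve out a general puzzle-piece $X\subset J$ in the sense of Section~\ref{fip} trapped by some iterate $P^m$, and apply Corollary~\ref{degimp1} to force $X$ to be a point, contradicting its containing a nondegenerate piece of $F$.

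In the solar case, Theorem~\ref{maina1}(3) gives a dense set of $K$-separate angles $\ta$ with degenerate impressions $\{z_\ta\}$ and dense $\si$-orbits. I pick such $\ta$ with $z_\ta\in F$ and consider the fiber $F_0:=\ph^{-1}(\ph(z_\ta))$, a subcontinuum of $J$ not containing $p$ (since $\ph(z_\ta)\in B$ but $\ph(p)\notin B$). If $F_0$ is $P^m$-invariant for some $m$ and nondegenerate, Corollary~\ref{degimp1} yields a direct contradiction. Otherwise, using $K$-separateness (so each landing point in the $P$-orbit of $z_\ta$ is uni-accessible) together with the density of the $\si$-orbit of $\ta$, I assemble from iterates of $F_0$ a wandering ray continuum connected to at least two angles, one of which lies in the $\si$-orbit of $\ta$ and hence has dense $\si$-orbit. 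This contradicts Theorem~\ref{nowander1}.

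Main obstacle: The central difficulty is extracting dynamics from the purely topological assumption on $\ph$, since $\ph$ is not assumed to intertwine with $P$ and the fibers have no a priori dynamical meaning. In the red dwarf case the bridge is supplied by the universal presence of $p$ in every impression, which chains every fiber over $B$ to $p$ through impressions of uniformly bounded-below diameter; in the solar case the bridge is the dense-orbit structure at $K$-separate landing points, permitting construction of a wandering ray continuum to which Theorem~\ref{nowander1} applies. The solar-case construction is the subtlest step, since one must verify both the wandering condition on the fiber's orbit (or reduce to direct $P^m$-invariance) and the required cardinality-at-least-two of the associated angle set.
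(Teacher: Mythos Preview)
Your proposal has a genuine gap, and it is precisely the obstacle you yourself flag: the fibers of $\ph$ carry no a priori dynamical meaning, and the bridges you propose do not supply one. In the solar case, the fiber $F_0=\ph^{-1}(\ph(z_\ta))$ is just some subcontinuum of $J$; nothing forces it to be contained in a finite union of impressions, so it is not a ray continuum and Theorem~\ref{nowander1} does not apply to it or to anything you ``assemble from iterates of $F_0$''. (You also never justify that a $K$-separate angle with dense orbit lands inside $F$; density of such landing points in $J$ is not the same as meeting every nondegenerate subcontinuum.) In the red dwarf case, your observation that $\ph(\imp(\al))$ joins $B$ to $\ph(p)$ is correct, but from this you jump to ``carve out a general puzzle-piece $X$ trapped by some iterate $P^m$'' with no construction whatsoever: Lemma~\ref{dist0} produces crosscuts, not exit continua satisfying $P^m(E_j)\subset W_j$, and there is no visible mechanism to produce the hypotheses of Corollary~\ref{degimp1}.

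The paper's proof does not split on the solar/red dwarf dichotomy at all. Its key step, which you are missing, is to show that \emph{$\ph$ collapses every impression to a point}, so that each fiber $\ph^{-1}(y)$ is a union of impressions. This is done by extending $\ph$ to $\Phi:\C\to\C$ via Moore's theorem (the fibers are non-separating, so the quotient is again a plane and $A=\Phi(J)$ is a dendrite), and then using $R$-defining families of crosscuts (Lemma~\ref{dist0}) to show each ``$A$-ray'' $\Phi(R_\al)$ lands at the single point $\ph(\imp(\al))$. Once fibers are unions of impressions, a valence count on the dendrite $A$ shows that all but countably many cutpoints $y\in A$ have exactly two $A$-rays landing, hence $\ph^{-1}(y)=\imp(\al)\cup\imp(\be)$. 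Choosing $y$ off the countable $\ph$-image of the grand orbits of $p$ and the critical points yields a genuine ray continuum $K$ with $p\notin K$ and no critical points in its forward orbit; Theorem~\ref{nowander} then forces two forward images of $K$ to meet, and a short argument with Corollary~\ref{degimp1} gives the contradiction. The bridge from topology to dynamics is thus the Moore-extension and the collapse of impressions, not the Theorem~\ref{maina1} dichotomy.
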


\begin{proof} By way of contradiction suppose that $\ph:J\to A$ is a monotone map
onto a locally connected non-degenerate continuum $A$. Define the map $\Phi$ on
the complex plane $\C$ so that it identifies precisely \emph{fibers}
(point-preimages) of $\ph$ and does not identify any points outside $J$. Since
the decomposition of $\C$ into fibers of $\ph$ and points of $\C\setminus J$ is
upper-semicontinuous, the map $\Phi$ is continuous. Since $J$ and hence all its
subcontinua are non-separating then by the Moore Theorem \cite{m62} the map
$\Phi$ maps $\C$ onto $\C$, and so $\Phi(J)=\ph(J)=A$ is a \emph{dendrite}
(locally connected continuum containing no simple closed curve). External
(conformal) rays $R_\al$ in the $J$-plane are then mapped into continuous
pairwise disjoint curves $\Phi(R_\al)$ in the $A$-plane called below
\emph{$A$-rays}. Clearly, if $R_\al=R$ lands then so does $\Phi(R)$. Let us
show that $\Phi(R)$ \emph{always} lands, and in fact the impression $\imp(\al)$
maps under $\ph$ to the landing point of the $A$-ray $\Phi(R)$. By
Lemma~\ref{dist0} there exists an $R$-defining family of crosscuts $C_t$. Since
$\Phi$ is continuous then $\dia(\Phi(C_t))\to 0$ as $t\to J$. Consider two
cases.

Suppose that $\Phi(C_t)$ is an arc for all $t\in R$, and hence a crosscut of
$A$. Since $A$ is locally connected then by Carath\'eodory theory $\Phi(C_t)$
converges to a unique point $x\in A$ which implies that $\Phi(R)$ lands. Also,
by Carath\'eodory theory the $\Phi$-images of the closures of the shadows
$\sh(C_t)$ converge to the same point $x$ which belongs to them all. Since the
intersection of the closures of the shadows $\sh(C_t)$ is the impression
$\imp(\al)$ of $\al$ then $\ph(\imp(\al))=\{x\}$ as desired. Otherwise there
exists $t\in R$ such that $\Phi(C_t)$ is a simple closed curve. This can only
happen if the endpoints of $C_t$ map under $\ph$ to the same point, say, $z$.
It follows that $\Phi(C_s)$ is a simple closed curve for all $s<_R t$, and
these curves all contain $z$. Thus, $\Phi(R)$ lands at $z$. Moreover, in this
case the shadows $\sh(C_s)$ are mapped inside the simple closed curves
$\Phi(C_s), s<_R t$ which as before implies that $\ph(\imp(\al))=\{z\}$.

Since impressions are upper-semicontinuous, the family of $A$-rays is
continuous, and the map $\psi$ associating to every angle $\al$ the landing
point of the $A$-ray $\Phi(R_\al)$ is a continuous map of $\ucirc$ onto $A$.
Define the \emph{valence} of a point $y\in A$ as the number of components of
the set $A\setminus \{y\}$ if it is finite and infinity otherwise. Let
$\mathcal B_y$ be the set of $A$-rays landing at $y$. Then the number of
components of $\ucirc\setminus \mathcal B_y$ equals the valence of $y$. Indeed,
if $(\al, \be)$ is a component of $\ucirc\setminus \mathcal B_y$ then the
$A$-rays of angles in $(\al, \be)$ land in a component of $A\setminus \{y\}$
and for distinct components of $\ucirc\setminus \mathcal B_y$ we get distinct
components of $A\setminus \{y\}$. In fact there is exactly one component of
$A\setminus \{y\}$ contained in the appropriate wedge in the plane formed by
the $A$-rays $\Phi(R_\al)$ and $\Phi(R_\be)$. Indeed, otherwise choose angles
$\ga, \ta\in (\al, \be)$ so that their $A$-rays $\Phi(R_\ga)$ and $\Phi(R_\ta)$
land at points $x, z$ from distinct components of $A\setminus \{y\}$. Then the
path $\psi([\ga, \ta])$ connects points $x$ and $z$ inside $A$ and hence must
pass through $y$. On the other hand by the construction $y\nin \psi([\ga,
\ta])$, a contradiction.

We show that except for a countable set of points there are no more than two
$A$-rays landing at $y\in A$. Let $Q'$ be the set of all points $y\in A$ with
finite valence for which there are infinitely many $A$-rays landing at $y$.
Then by the previous paragraph $\mathcal B_y$ has a non-empty interior for any
point $y\in Q'$, and so $Q'$ is countable. On the other hand, by Theorem 10.23
of \cite{nad92} the set $Q''$ of all branch points of $A$ is countable (a
\emph{branch point} is a point of valence greater than $2$). Hence, for any
point $y\in A\setminus (Q'\cup Q'')$ such that its valence is greater than $1$
(such points are called \emph{cutpoints}) exactly two $A$-rays land at $y$.

Choose a point $y\in A\setminus (Q'\cup Q'')$ using a bit of dynamics. Let $H$
be the union of grand orbits of $p$ and all critical points of $P$ and set
$Q'''=\ph(H)$. Then $\hat Q=Q'\cup Q''\cup Q'''$ is countable. Choose $y\in
A\setminus \hat Q$ to be a cutpoint of $A$. Then exactly two $A$-rays land at
$y$. Moreover, let $\ph^{-1}(y)=K$; then by the choice of $y$ forward images of
$K$ avoid $p$ and critical points of $P$. Since impressions are mapped by $\ph$
into points, there are exactly two angles $\al, \be$ with $K=\imp(\al)\cup
\imp(\be)$ and the impressions of other angles are disjoint from $K$.

By Theorem~\ref{nowander} there are integers $0\le l<m$ such that $P^l(K)$ and
$P^m(K)$ intersect. Since forward images of $K$ avoid critical points,
$\si^r(\al)\ne \si^r(\be)$ for any $r$ and hence both
$P^l(K)=\imp(\si^l(\al))\cup \imp(\si^l(\be))$ and $P^m(K)=\imp(\si^m(\al))\cup
\imp(\si^m(\be))$ are unions of impressions of two distinct angles. Now, if the
pair of angles $\si^l(\al), \si^l(\be)$ maps into itself by $\si^{m-l}$ then
$P^l(K)$  is a $P^{m-l}$-fixed continuum not containing $p$, hence by
Corollary~\ref{degimp1} it is a repelling periodic point at which two rays land, a
contradiction. Hence there exists an angle $\ga\in \{\si^m(\al),
\si^m(\be)\}\setminus \{\si^l(\al), \si^l(\be)\}$ such that $\imp(\ga)$
non-disjoint from the $P^l(K)$. If we now pull $P^l(K)$ back to $K$ we will get
an angle $\ga'\nin \{\al, \be\}$ whose impression is non-disjoint from $K$, a
contradiction.
\end{proof}

Since we assume that $J$ is decomposable, all impressions are proper and
have empty interior. In particular no countable union of impressions coincides
with $J$. We begin by studying \emph{red dwarf} Julia sets, i.e. uniCremer
Julia sets such that impressions of all angles contain the Cremer point $p$.

\begin{lemma}\label{redwarf} If $J$ is a red dwarf Julia set then {\rm (1)} the
intersection $K$ of all impressions contains all forward images of all critical
points, {\rm (2)} there exists $\e>0$ such that the diameter of any impression is greater
than $\e$, {\rm (3)} there are no points at which $J$ is connected im kleinen,
and {\rm (4)} no point of $J$ is biaccessible and $p$ is not
accessible from $\C\setminus J$.
\end{lemma}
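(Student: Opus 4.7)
I would proceed in the order: non-accessibility of $p$, then (1), then (2) and (3), and finally the absence of bi-accessible points, with (1) being the main technical step. For non-accessibility, suppose some ray $R_\al$ lands at $p$. Since $P(p)=p$, the ray $R_{d\al}=P(R_\al)$ also lands at $p$, so the set of angles whose rays land at $p$ is $\si$-invariant. Using the standard fact that for a polynomial with connected Julia set only finitely many rays land at any given point, this set is finite, hence $\al$ is $\si$-preperiodic and some $\si^m(\al)$ is periodic. A periodic ray lands at a repelling or parabolic periodic point by Douady--Hubbard, contradicting $p$ being a Cremer point.

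The heart of the lemma is (1). A useful reduction is that since $\si^n$ is surjective on $\uc$, it suffices to show that every critical point $c$ whose forward orbit remains in $J$ already lies in $K$; for then, given any $\be\in\uc$, pick $\al$ with $\si^n(\al)=\be$ and conclude $P^n(c)\in P^n(\imp(\al))=\imp(\be)$. Suppose for contradiction that $c\nin\imp(\al_0)$ for some $\al_0$. My plan is to use the continuum $\imp(\al_0)\ni p$ together with pieces of external rays at angles close to $\al_0$ to construct a general puzzle-piece (in the sense of Section~\ref{fip}) that separates $c$ from $p$, and then to pull this configuration back along the branch of $P^{-n}$ landing near $c$, exploiting the criticality of $c$ to keep the pullbacks controlled. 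Either the forward orbit of the resulting ray continuum accumulates to a non-degenerate $P^m$-invariant subcontinuum of $J$ missing $p$, contradicting Corollary~\ref{degimp1} applied to basic uniCremer polynomials; or the ray continuum itself is wandering with an angle-set that never collapses to a point, contradicting Theorem~\ref{nowander}. This combinatorial-dynamical construction is the main technical difficulty.

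Assertions (2) and (3) follow quickly from (1). For (2), every impression contains $K$, so $\dia(\imp(\al))\ge\dia(K)$ for every $\al$; and since $K$ contains both $p$ and some iterated critical image $P^n(c)\ne p$ (the Cremer point is not critical), we have $\dia(K)>0$ and may take $\e=\tfrac{1}{2}\dia(K)$. For (3), Lemma~\ref{cikalt}(3) applies directly: the intersection $K$ of all impressions is non-empty but not a singleton, so $J$ is connected im kleinen at no point.

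Finally, suppose $x\in J$ is bi-accessible via rays $R_\al,R_\be$. The union $R_\al\cup R_\be\cup\{x\}$ separates $\C$ into two open regions; by the already-proved non-accessibility of $p$ we have $p\ne x$, so $p$ lies in one of them, say $V$. For any $\ga$ in the open arc of $\uc\setminus\{\al,\be\}$ lying on the side of the other region $V'$, the ray $R_\ga$ is disjoint from $R_\al\cup R_\be$ and therefore contained in $V'$, whence $\imp(\ga)\subset\ol{V'}$. Since $\ol{V'}$ is disjoint from the open set $V\ni p$, this yields $p\nin\imp(\ga)$, contradicting the red dwarf hypothesis.
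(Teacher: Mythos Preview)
Your treatment of (1) is where the proposal breaks down. You outline a plan---build a general puzzle-piece separating $c$ from $p$, pull it back, and feed the result into Corollary~\ref{degimp1} or Theorem~\ref{nowander}---but you never carry out any of these steps, and it is not at all clear that the construction can be made to work. By contrast, the paper's argument for (1) is a five-line direct computation that needs none of this machinery: assume some critical \emph{value} $P(c)\nin\imp(\al)$, take a simple curve $T$ from $P(c)$ to $\infty$ disjoint from $\imp(\al)$, and look at the component of $P^{-1}(T)$ through $c$. Since $c$ is critical of local degree $k\ge 2$, this component is a star with $k$ arms to $\infty$ that cuts $\C$ into $k\ge 2$ pieces, each containing at least one of the preimage impressions $\imp(\be)$ with $\si(\be)=\al$. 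By the red-dwarf hypothesis \emph{every} such $\imp(\be)$ contains $p$, which forces $p$ to lie in more than one complementary component---a contradiction. Thus every impression contains every critical value, and forward invariance of $K$ finishes (1). Your plan never uses the branched-covering structure at $c$, which is precisely what makes the argument short.

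Two smaller issues. First, for the non-accessibility of $p$ you invoke ``the standard fact that for a polynomial with connected Julia set only finitely many rays land at any given point.'' This is known for (pre)periodic points but is \emph{not} a standard fact for arbitrary points of a non-locally-connected Julia set; you cannot use it here. The paper avoids this by reversing your order: it first proves that no point is bi-accessible (for $x\ne p$ your half-plane argument works verbatim; for $x=p$ pass to a preimage $q\ne p$ of $p$), and only then deduces non-accessibility of $p$ by observing that a landing ray $R_\al$ at $p$ cannot be $\si$-fixed (Douady--Hubbard), so $R_\al$ and $R_{\si(\al)}$ would make $p$ bi-accessible. Second, in (2) you justify $\dia(K)>0$ by ``the Cremer point is not critical,'' but this only gives $c\ne p$; it does not prevent $P(c)=p$. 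The paper appeals to Ma\~n\'e's theorem to obtain a recurrent critical point whose entire forward orbit avoids $p$, which is what actually forces $K\supsetneq\{p\}$.
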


\begin{proof} Let us show that for any angle $\al$ its impression
$\imp(\al)$ contains all critical images. Indeed, otherwise there exists a
critical point $c$ such that $P(c)\nin \imp(\al)$ which implies that $c\nin K$.
Choose a curve $T$ starting at $P(c)$, going to infinity and bypassing
$\imp(\al)$. Then the pullback of $T$ containing $c$ cuts the plane into at
least two components each of which contains at least one pullback of
$\imp(\al)$, a contradiction. Hence $K$ contains all critical values, and since
$K$ is forward invariant, $K$ contains all forward images of all critical
points.

Clearly, (1) implies (2) (by \cite{mane93} there is a recurrent critical point
whose forward orbit avoids $p$). By Lemma~\ref{cikalt}(2) $J$ is nowhere
connected im kleinen. Moreover, no point $x\in J$ is biaccessible from
$\C\setminus J$. Indeed, if $x\ne p$ is biaccessible then one of the two
half-planes into which $x$ and rays landing at $x$ cut the plane will not
contain $p$, hence all rays contained in that half-plane will not contain $p$
in their impressions, a contradiction. On the other hand, if $x=p$ we can take
a preimage of $p$ and get the same contradiction. Hence $p$ cannot be
accessible from $\C\setminus J$ because if it is then the corresponding ray is
not fixed (periodic rays cannot land on $p$ by Douady and Hubbard
\cite{douahubb85}) and hence this ray and its image show that $p$ is
biaccessible, a contradiction.
\end{proof}

Lemma~\ref{imp} follows from Corollary~\ref{degimp1}.

\begin{lemma}\label{imp} If $K\subset J$ is a $P^n$-invariant continuum or singleton
not containing $p$ then $K$ is a singleton. In particular, if $\ga$ is a periodic
angle and $p\nin \imp(\ga)$ then $\imp(\ga)$ is a singleton.
\end{lemma}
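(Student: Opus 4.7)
The plan is to apply Corollary~\ref{degimp1}(2) to the pair $(P^n,K)$. Since any subcontinuum of the non-separating continuum $K_P$ is itself non-separating, $K\subset J\subset K_P$ is a non-separating plane continuum, and by hypothesis $P^n(K)\subset K$; hence item~(2) of the corollary applies with $P$ replaced by $P^n$. If $K$ is already a singleton there is nothing to prove, so I assume $K$ is non-degenerate.

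The dichotomy at the end of Corollary~\ref{degimp1} then yields a $P^n$-fixed point $q\in K\cap J$ which is either (A) non-repelling, or (B) repelling with some external ray $R$ landing at it that is not $P^n$-fixed. In case (B) the rays $R$ and $P^n(R)$ both land at $q$ and are distinct, so $q$ is a repelling bi-accessible periodic point of $P$, directly contradicting the basic uniCremer hypothesis. In case (A) the point $q\in J$ is a non-repelling periodic point of $P$ with $q\ne p$; by the uniqueness of the Cremer cycle for a basic uniCremer polynomial (recalled in the Introduction from~\cite{gm93,kiwi00}) $q$ cannot be irrationally neutral, and a parabolic periodic point would be bi-accessible, hence also forbidden. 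This rules out both alternatives and forces $K$ to be a singleton.

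For the ``in particular'' statement, if $\si^n(\ga)=\ga$ then $\imp(\ga)$ is a nonempty $P^n$-invariant subcontinuum of $J$, so the main assertion applied to $K=\imp(\ga)$ yields that $\imp(\ga)$ is a singleton whenever $p\notin\imp(\ga)$. The principal delicate step is case (A): once one has recorded that the only non-repelling periodic point in the Julia set of a basic uniCremer polynomial is the fixed Cremer point $p$, the lemma becomes a clean application of Corollary~\ref{degimp1}.
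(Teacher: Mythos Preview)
Your argument is correct and follows essentially the same route as the paper: both apply Corollary~\ref{degimp1}(2) to the $P^n$-invariant continuum $K$ and use the basic uniCremer hypothesis to rule out the alternatives; the paper phrases this as verifying the hypothesis of the first clause (``all $P^n$-fixed points in $K$ are repelling and have a single $P^n$-fixed ray''), while you use the contrapositive dichotomy, which amounts to the same thing.

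One small imprecision worth tightening: the assertion that ``a parabolic periodic point would be bi-accessible'' is not literally true in general (a parabolic point with multiplier $1$ and a single petal can have a unique landing ray). The correct reason parabolic points cannot occur is the one the paper invokes via \cite{gm93,kiwi00}: any bounded Fatou component (in particular a parabolic basin) would be separated from the Cremer point $p$ by a repelling or parabolic bi-accessible periodic cutpoint, contradicting the basic uniCremer hypothesis. With this adjustment, your case (A) is fully justified, and your proof matches the paper's.
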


\begin{proof}
All $P^n$-fixed points in $K$ are repelling, and by the definition
of a basic uniCremer polynomial at each of them exactly one $P^n$-fixed ray
lands. Hence by Corollary~\ref{degimp1} $K$ is a singleton.
\end{proof}

For $x\in J$ let $A(x)$ be the set of all angles whose impressions contain $x$,
and let $B(x)$ be the union of these impressions. Then $A(x)$ and $B(x)$ are
closed sets. In Lemma~\ref{pnotksep} we study these sets for a periodic $x$.

\begin{lemma}\label{pnotksep} If $x$ is periodic then one of the following holds:

\begin{enumerate}

\item $x\nin B(p)$, then $\{x\}=B(x)=\imp(\ta)$ for a periodic K-separate angle
$\ta$, $A(x)=\{\ta\}$, and $x$ is a repelling periodic point;

\item $x\in B(p)$, then $B(x)$ is non-degenerate and no angle $\ta\in A(x)$ is K-separate.

\end{enumerate}

In particular, no angle $\ta\in A(p)$ is K-separate, and $B(p)$ is
non-degenerate.

\end{lemma}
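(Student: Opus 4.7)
The plan is to first establish the two ``In particular'' assertions---that $B(p)$ is non-degenerate and that no $\tau\in A(p)$ is K-separate---and then to deduce the two numbered cases by short dichotomies. For the non-degeneracy of $B(p)$, I would argue by contradiction: if $B(p)=\{p\}$, then every $\al\in A(p)$ has $\imp(\al)=\{p\}$, so the ray $R_\al$ lands at $p$. This contradicts the classical Douady--Hubbard theorem forbidding external rays from landing at a Cremer periodic point. Thus, moreover, for each $\al\in A(p)$ the impression $\imp(\al)$ is itself non-degenerate.

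For case (1), assume $x\nin B(p)$, so $p\nin B(x)$. Since $x$ is periodic of some period $n$, one checks $\sigma^n(A(x))\subseteq A(x)$, so $B(x)$ is a $P^n$-invariant continuum in $J$ not containing $p$, and Lemma~\ref{imp} gives $B(x)=\{x\}$. Hence every $\al\in A(x)$ satisfies $\imp(\al)=\{x\}$, so $R_\al$ lands at $x$. If $|A(x)|\ge 2$, then $x$ is biaccessible; being periodic with $x\ne p$, it must be repelling or parabolic, contradicting the basic uniCremer hypothesis. So $A(x)=\{\tau\}$, and $\sigma^n(\tau)\in A(x)=\{\tau\}$ forces $\tau$ periodic. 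Moreover $B(x)=\{x\}$ means no other impression meets $x$, so $\tau$ is K-separate. Finally, $x$ is the landing point of a periodic ray, hence repelling or parabolic; a parabolic $x$ would have $q\ge 2$ rays landing at it, contradicting $|A(x)|=1$, so $x$ is repelling.

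For case (2), fix $\al\in A(p)$ with $x\in \imp(\al)$; since $\imp(\al)$ is non-degenerate (by the first step) and contained in $B(x)$, the set $B(x)$ is non-degenerate. To show that no $\tau\in A(x)$ is K-separate, suppose $\tau\in A(x)$ were K-separate. With $\al$ as above, if $\al\ne\tau$ then $x\in\imp(\al)\cap\imp(\tau)$ contradicts K-separateness; so $\al=\tau$, placing $\tau\in A(p)$. This reduces case~(2) to the assertion that no $\tau\in A(p)$ is K-separate.

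That assertion is the main obstacle. Suppose $\tau\in A(p)$ is K-separate. If $|A(p)|\ge 2$, any $\be\in A(p)\sm\{\tau\}$ puts $p\in\imp(\be)\cap\imp(\tau)$, a contradiction. So $A(p)=\{\tau\}$; since $\sigma(A(p))\subseteq A(p)$, the angle $\tau$ is fixed, and $B(p)=\imp(\tau)$ is a $P$-invariant, non-degenerate, K-separate continuum containing $p$ and the fixed repelling/parabolic landing point $y\ne p$ of $R_\tau$. To contradict this, I would approximate $\tau$ by periodic angles $\al_n\ne\tau$; by K-separateness each $\imp(\al_n)$ is disjoint from $\imp(\tau)$, hence $p\nin\imp(\al_n)$, and Lemma~\ref{imp} forces $\imp(\al_n)=\{w_n\}$. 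Since $w_n\nin B(p)$, case~(1) gives $A(w_n)=\{\al_n\}$ with $\al_n$ K-separate, and Lemma~\ref{condklei} then makes $J$ connected im kleinen at $w_n$. As $\al_n\to\tau$, the points $w_n$ accumulate on $\imp(\tau)\ni p$, so $J$ has infinitely many distinct connected-im-kleinen points clustering toward $\imp(\tau)$. The hard step will be extracting a contradiction from this: in the red dwarf regime the intersection $Z$ of all impressions is nonempty and Lemma~\ref{cikalt}(3) directly forbids more than one connected-im-kleinen point, yielding the contradiction, while in the solar regime a separate argument is needed, exploiting the $P$-invariance of $\imp(\tau)$ and the fact that it carries the Cremer fixed point to rule out such a K-separate invariant impression.
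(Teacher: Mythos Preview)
Your reductions are clean and largely correct: case~(1) goes through essentially as in the paper, and the reduction of case~(2) to ``no $\tau\in A(p)$ is K-separate'' is fine. Two small quibbles: the assertion that no external ray can land at a Cremer point is true but is not literally the Douady--Hubbard landing theorem (that theorem only handles \emph{periodic} rays; the irrational case needs an extra step, e.g.\ via hedgehogs or a biaccessibility argument); and ``a parabolic $x$ would have $q\ge 2$ rays landing at it'' is false in general (a parabolic point with a single petal has one landing ray), though in the present setting one can argue separately that parabolic points do not occur.

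The genuine gap is in the final step. Once you have reduced to $A(p)=\{\tau\}$ with $\tau$ $\sigma$-fixed and K-separate, you are automatically in the \emph{solar} situation: every angle $\beta\ne\tau$ has $\imp(\beta)\cap\imp(\tau)=\emptyset$, hence $p\notin\imp(\beta)$. So your ``red dwarf'' branch is vacuous here, and in the solar branch there is nothing contradictory about having many points $w_n$ at which $J$ is connected im kleinen accumulating on $\imp(\tau)$ --- indeed, Theorem~\ref{maina} says that in the solar case such points are plentiful. Your outline therefore stalls precisely at the crucial point.

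The paper's argument supplies the missing idea. Since $\tau$ is fixed, $R_\tau$ lands at a repelling fixed point $y\in\imp(\tau)$ with $y\ne p$. If $\imp(\tau)$ contained no critical point, choose a neighborhood $U$ of $\imp(\tau)$ whose closure is critical-point free and form the Perez-Marco hedgehog $K\supset\imp(\tau)$ (the component through $p$ of the set of points whose forward orbit stays in $\overline{U}$). By \cite{pere94,pere97} a hedgehog contains no periodic point other than $p$, contradicting $y\in K$. Hence some critical point $c$ lies in $\imp(\tau)$; since $P$ is not locally one-to-one at $c$, one gets $c\in\imp(\tau+i/d)$ for some $1\le i\le d-1$, so $\imp(\tau)$ meets another impression, contradicting K-separateness. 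This hedgehog step is what your proposal is missing.
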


\begin{proof} Consider first the case when $x=p$. We need to show that then
the case (2) holds. First let us show that $B(p)$ is non-degenerate. Suppose
that $A(p)$ is infinite. Since $A(p)$ is invariant it follows from a well-known
result from the topological dynamics of locally expanding maps that
$\si|_{A(p)}$ is not one-to-one. Hence by \cite{hea96} $B(p)$ contains a
critical point and cannot coincide with $p$. Suppose that $A(p)$ is finite.
Then $A(p)$ contains periodic angles, hence $B(p)$ contains periodic points
distinct from $p$ and hence again $B(p)$ is not degenerate.

Let us show that no angle $\ta\in A(p)$ is K-separate. We may assume that
$A(p)=\{\ta\}$ consists of just one angle and need to show that $\ta$ is not
K-separate. Clearly, $\si(\ta)=\ta$. Denote the landing point of $R_\ta$ by
$x$, and show that there is a critical point $c\in B(p)=\imp(\ta)$. Indeed,
otherwise choose a neighborhood $U$ of $B(p)$ such that no critical points
belong to $\ol{U}$, consider the set of all points never exiting $\ol{U}$, and
then the component $K$ of this set containing $p$. Such sets are called
\emph{hedgehogs} (see papers by Perez-Marco \cite{pere94} and \cite{pere97})
and have a lot of important properties. In particular, by \cite{pere94} and
\cite{pere97} $K$ cannot contain a periodic point other than $p$, a
contradiction (clearly, $B(p)\subset K$ and $B(p)$ contains $x$). Hence $c\in
B(p)$ for some critical point $c$. This implies that for some integer $i, 1\le
i\le d-1$ we have $c\in \imp(\ta+i/d)$ and hence $B(p)=\imp(\ta)$ is not
disjoint from the impression of another angle and $\ta$ is not K-separate as
desired.

Suppose now that $x\nin B(p)$ is a periodic point of period $m$. Then $p\nin
B(x)$ and hence by Lemma~\ref{imp} $B(x)$ is degenerate. By the above quoted
topological result this implies that $A(x)$ is finite and therefore, by the
assumptions on $P$, $A(x)=\{\ta\}$ where $\ta$ is periodic and K-separate. Now
assume that $x\in B(p)$. Then there is an angle $\al\in A(p)$ with $p, x\in
\imp(\al)$ and hence $p\in B(x)$ and $B(x)$ is not degenerate.
Now, if there are more than one angle in $A(x)$ then all such angles are
not K-separate and we are done. If however there is only one angle in $A(x)$
then this angle is $\al=\ta$ which belongs to $A(p)$ and by the previous
paragraph is not K-separate either.
\end{proof}

Lemma~\ref{pnotksep} implies a few facts: e.g., if $\ta$ is a K-separate
periodic angle then $\imp(\ta)$ is a repelling periodic point.
Lemma~\ref{exishort} shows that in some cases there are lots of such angles.
For $F\subset \ucirc$ denote by $\imp(F)$ the set $\cup_{\ta\in F}\imp(\ta)$.
Let $E$ be the set of all K-separate periodic angles; by Lemma~\ref{pnotksep}
each angle in $E$ has degenerate impression.

\begin{lemma}\label{exishort} Suppose $p\nin \imp(\ta)$ for some angle $\ta$.
Then {\rm (1)} $B(p)$ is a nowhere dense subset of $J$, {\rm (2)} the set $E$
is dense in $\ucirc$, {\rm (3)} the set $\imp(E)$ is dense in $J$, {\rm (4)}
for a closed set of angles $F\ne \ucirc$ the set $\imp(F)$ is a proper subset
of $J$, {\rm (5)} $J$ is connected im kleinen at every point $y\in \imp(E)$,
{\rm (6)} in any arc $W\subset \ucirc$ there are two angles whose impressions
meet.
\end{lemma}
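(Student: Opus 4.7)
The plan is to first show that $A(p) := \{\alpha \in \ucirc : p \in \imp(\alpha)\}$ is nowhere dense in $\ucirc$, then establish (2) as the combinatorial core, then derive (4), (5), (6) quickly from (2) together with Lemma~\ref{condklei}, and finally handle (1) and (3) via their equivalence.

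First I would observe that $A(p)$ is closed (by upper semi-continuity of impressions) and forward $\sigma$-invariant, since $P(p)=p$ combined with $P(\imp(\alpha))\subset \imp(\sigma(\alpha))$ gives $\sigma(A(p))\subset A(p)$. Hence $\ucirc \setminus A(p)$ is open and backward invariant under $\sigma$. By hypothesis $\ta \in \ucirc \setminus A(p)$; since $\sigma$ is expanding, the iterated preimages $\bigcup_{n\ge 0}\sigma^{-n}(\ta)$ are dense in $\ucirc$ and remain in $\ucirc \setminus A(p)$. Thus $A(p)$ is nowhere dense, and every open arc $W \subset \ucirc$ contains an open subarc $W' \subset \ucirc \setminus A(p)$.

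For (2), given $W$ and such a $W'$, I would pick a periodic angle $\gamma \in W'$; by Lemma~\ref{imp}, $\imp(\gamma)=\{q_\gamma\}$ with $q_\gamma$ a repelling periodic point. Applying Lemma~\ref{pnotksep} to $q_\gamma$: in case (1) ($q_\gamma \notin B(p)$) we immediately get $A(q_\gamma)=\{\gamma\}$ with $\gamma$ K-separate, so $\gamma \in E \cap W$. The main work is to exclude the alternative, that every periodic $\gamma \in W'$ has $q_\gamma \in B(p)$. In that event each such $\gamma$ would produce an angle $\alpha_\gamma \in A(p)$ with $q_\gamma \in \imp(\alpha_\gamma)$; exploiting the density of periodic angles in $W'$, the basic uniCremer hypothesis that no repelling periodic point is bi-accessible, and the wandering-continuum restrictions of Theorem~\ref{nowander}, I would derive a contradiction. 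This exclusion is the principal obstacle of the proof.

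The remaining parts follow cleanly. For (4), given closed $F \ne \ucirc$, (2) furnishes a K-separate $\tau$ in the open arc $\ucirc \setminus F$, so $\imp(\tau)$ is disjoint from $\imp(F)$ and $\imp(F) \ne J$. For (5), each $y \in \imp(E)$ is the singleton impression of a K-separate periodic angle, so Lemma~\ref{condklei} yields that $J$ is connected im kleinen at $y$. For (6), given an arc $W$, (2) produces a K-separate periodic $\tau \in W$ with $\imp(\tau)=\{x\}$; Lemma~\ref{condklei} then supplies angles $s<\tau<t$ arbitrarily close to $\tau$ (hence inside $W$) with $\imp(s)\cap \imp(t)\ne \emptyset$. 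For (1) and (3), note that K-separateness forces $\imp(E)\cap B(p)=\emptyset$, so the two assertions are equivalent: denseness of $\imp(E)$ in $J$ forces $B(p)$ to have empty interior, and conversely nowhere-denseness of $B(p)$ combined with the density of repelling periodic points in $J$ and Lemma~\ref{pnotksep}(1) recovers (3). I would establish (1) directly by the same geometric-combinatorial route as (2), ruling out that an entire nonempty open subset of $J$ is covered by impressions containing $p$.
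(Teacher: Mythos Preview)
There is a genuine gap at the heart of your argument for (2). You correctly isolate the obstacle: for a periodic $\gamma\in W'\subset\ucirc\setminus A(p)$ with $\imp(\gamma)=\{q_\gamma\}$, one must rule out $q_\gamma\in B(p)$. But you do not actually do this---you only gesture at bi-accessibility and Theorem~\ref{nowander}. Neither tool applies as stated. Knowing $q_\gamma\in\imp(\gamma)\cap\imp(\alpha_\gamma)$ for some $\alpha_\gamma\in A(p)$ does \emph{not} make $q_\gamma$ bi-accessible: only $R_\gamma$ is known to land at $q_\gamma$, while $R_{\alpha_\gamma}$ need not. And the continuum $\imp(\gamma)\cup\imp(\alpha_\gamma)$ is not a wandering ray continuum in any evident sense (it contains $p$, for one thing), so Theorem~\ref{nowander} gives nothing. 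Since you also defer (1) to ``the same geometric-combinatorial route as (2)'', the core of the lemma is left unproved.

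The paper's logical order is the reverse of yours: it proves (1) first by a concrete separation argument, and then (2) and (3) drop out. For (1), suppose $B(p)=J$; then $A(p)$ is infinite (decomposability), and pulling $A(p)$ back under $\si^N$ into any arc yields infinitely many angles whose impressions share one of finitely many $P^N$-preimages of $p$---so every arc contains two angles with intersecting impressions. Applying this inside an arc $U\ni\ta$ with $U\cap A(p)=\emptyset$ gives $\ga,\ga'\in U$ whose rays and impressions separate the plane; the component $H$ not containing $p$ still meets $J$ (else some impression would have interior in $J$), and any $h\in J\cap H$ lies in no impression that also contains $p$. This contradicts $B(p)=J$; backward $P$-invariance of $J\setminus B(p)$ then upgrades $B(p)\ne J$ to nowhere density. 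With (1) in hand, (2) is short: periodic points outside $B(p)$ are dense in $J$, and by Lemma~\ref{pnotksep}(1) each is the degenerate impression of a K-separate periodic angle; openness of $E$ among periodic angles (upper semi-continuity) and invariance finish it. Your derivations of (4), (5), (6) from (2) and Lemma~\ref{condklei} match the paper.
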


\begin{proof} To prove the first claim it is enough to show that $B(p)\ne J$.
Suppose otherwise. Since by our standing assumption $J$ is decomposable then no
finite union of impressions can coincide with $J$, and $A(p)$ is infinite.
Then in any open arc $V$ there are angles whose impressions meet. Indeed, we
can find a big integer $N$ such that $\si^{-N}(A(p))\cap V$ is infinite. Each
angle from $\si^{-N}(A(p))$ contains a $P^N$-preimage of $p$ in its impression.
Since there are only finitely many $P^N$-preimages of $p$ then there are two
angles $\ga, \ga'\in \si^{-N}(A(p))\cap V$ which contain the same preimage of
$p$ and therefore meet as desired.

Denote by $U$ an open arc containing $\ta$ such that for any angle in $U$ its
impression does not contain $p$ (then by Lemma~\ref{imp} periodic angles from
$U$ have degenerate impressions). Assume that $(\ga, \ga')\subset U$ is an arc
such that the impressions of $\ga, \ga'$ meet. The union $Z=R(\ga)\cup
\imp(\ga)\cup \imp(\ga')\cup R(\ga')$ cuts the plane into two half-planes $H$
and $G$; assume for the sake of definiteness that $p\in G$. Since no finite
union of impressions coincides with $J$ then the union of impressions of
angles from $(\ga, \ga')$ is not contained in $Z$. Hence there exists a
point $h\in J\cap H$. Then the impression of an angle from $(\ga', \ga)$ cannot
contain $h$ because $Z$ separates this angle's ray from $h$. On the other hand,
the impression of an angle from $[\ga, \ga']$ cannot contain $p$ either. Hence
no impression contains $h$ and $p$ simultaneously, implying that $B(p)\ne J$.

By Lemma~\ref{pnotksep} $E$ is the set of all periodic angles such that landing
points of their rays do not belong to $B(p)$. By the upper semi-continuity of
impressions, $E$ is open in the set of all periodic angles. Since $E$ is
invariant, $E$ is dense in $\ucirc$ which proves (2). We claim that $\imp(E)$
is dense in $J$. Indeed, otherwise there exists an open set $U\subset J$
disjoint from $\imp(E)$. Since periodic points are dense in $J$, they are dense
in $U$, and by the definition of $E$ all these periodic points belong to
$B(p)$. Hence $B(p)$ has non-empty interior, a contradiction with (1). Thus,
$\imp(E)$ is dense in $J$ which proves (3). The claim (2) implies (4). By
Lemma~\ref{condklei} the rest of the lemma follows.
\end{proof}

So far the results of this section use mainly topological tools. This changes
in the lemmas below where we rely upon both continuum theory and dynamics. Our
aim is to prove that the angles with dense in $\ucirc$ orbits have degenerate
impressions. Problems of this kind are often related to the dynamics of
critical points. The result  obtained in Lemma~\ref{densenotcrit}
enables us to apply some standard tools and seems to be interesting by itself.
However first we need a simple lemma.

\begin{lemma}\label{densequiv} Suppose that there exists an angle $\ta$ whose
impression does not contain $p$. Then the following statements are equivalent.

\begin{enumerate}

\item An angle $\al$ has a dense in $\ucirc$ orbit.

\item Any point of $\imp(\al)$ has a dense in $J$ orbit.

\item There exists a point in $\imp(\al)$ which has a dense in $J$ orbit.

\end{enumerate}

\end{lemma}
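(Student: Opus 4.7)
The implication $(2)\Rightarrow (3)$ is immediate because impressions are nonempty. The plan is to handle the two substantive directions by exploiting, respectively, forward-invariance of unions of impressions and upper semi-continuity of impressions combined with the fact (already available under the hypothesis $p\notin\imp(\ta)$) that there are impressions of arbitrarily small diameter densely in $J$. Throughout I will use the standard fact $P(\imp(\be))=\imp(\si(\be))$ for every angle $\be$.

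For $(3)\Rightarrow (1)$ I argue by contrapositive. If $\al$ does not have dense orbit, set $L=\ol{\cup_{n\ge 0}\si^n(\al)}$; this is a proper, closed, forward $\si$-invariant subset of $\ucirc$. Let $M=\imp(L)=\cup_{\ta\in L}\imp(\ta)$. Upper semi-continuity of impressions together with closedness of $L$ shows that $M$ is closed in $J$ (if $x_k\in\imp(\ta_k)$ with $\ta_k\in L$ and $x_k\to x$, pass to a subsequence with $\ta_k\to\ta\in L$; then $x\in\imp(\ta)\subset M$). Forward invariance of $M$ follows from $P(\imp(\ta))=\imp(\si(\ta))$ and $\si(L)\subset L$. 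The key input is now Lemma~\ref{exishort}(4), applicable precisely because of the hypothesis $p\notin\imp(\ta)$, which tells us $M\subsetneq J$. Since any $x\in\imp(\al)\subset M$ has its entire forward orbit in $M$, the orbit of $x$ cannot be dense in $J$, contradicting $(3)$.

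For $(1)\Rightarrow (2)$, fix $x\in\imp(\al)$ and an arbitrary open set $U\subset J$; I need $n$ with $P^n(x)\in U$. By Lemma~\ref{exishort}(3) the set $\imp(E)$ is dense in $J$, so I can pick a K-separate periodic angle $\be\in E$ whose (degenerate) impression $\imp(\be)=\{w\}$ lies in $U$. By Lemma~\ref{condklei}, $J$ is connected im kleinen at $w$, and then Lemma~\ref{cikalt}(1) produces an angle $\ga$ with $\imp(\ga)\subset U$. Since the orbit of $\al$ under $\si$ is dense, choose $n_k\to\infty$ with $\si^{n_k}(\al)\to\ga$; upper semi-continuity of impressions gives $\imp(\si^{n_k}(\al))\subset U$ for all large $k$. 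Because $P^{n_k}(x)\in P^{n_k}(\imp(\al))=\imp(\si^{n_k}(\al))$, we conclude $P^{n_k}(x)\in U$ eventually, so the orbit of $x$ is dense in $J$.

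The only delicate point is in $(3)\Rightarrow (1)$: one must know that $\imp(L)$ is a \emph{proper} closed invariant subset of $J$ for every proper closed invariant $L\subset\ucirc$. This is exactly the content of Lemma~\ref{exishort}(4), and it is where the standing hypothesis of the lemma is actually used; everything else is soft.
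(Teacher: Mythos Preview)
Your proof is correct and follows essentially the same route as the paper's: upper semi-continuity of impressions together with Lemma~\ref{exishort}(3) for $(1)\Rightarrow(2)$, and Lemma~\ref{exishort}(4) applied to the orbit closure for $(3)\Rightarrow(1)$ (the paper phrases the latter directly as $\imp(\omega(\al))=J\Rightarrow\omega(\al)=\ucirc$, which is your contrapositive). One minor remark: in your $(1)\Rightarrow(2)$ the detour through Lemmas~\ref{condklei} and~\ref{cikalt}(1) is unnecessary, since the angle $\be\in E$ you already chose has $\imp(\be)=\{w\}\subset U$ and can serve directly as your $\ga$.
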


\begin{proof} Suppose that $\al$ has a dense in $\ucirc$ orbit and choose
$x\in \imp(\al)$. Let $y\in J$ and $\e>0$. By Lemma~\ref{exishort} there exists
$\gamma\in \ucirc$ such that $\imp(\gamma)=\{z\}$ and $d(z,y)<\e/2$. Since the
orbit of $\al$ is dense in $\ucirc$ and impressions are upper semi-continuous,
there exists $n>0$ such that $\si^n(\al)$ is so close to $\ga$ that
$\imp(\si^n(\al))\subset B(z,\e/2)$, and hence $d(P^n(x),y)<\e$. Therefore,
$\omega(x)=J$ and (1) implies (2). Clearly, (2) implies (3). Now, suppose that
(3) holds. Then $\imp(\omega(\al))=J$ which by Lemma~\ref{exishort}(4) implies
that $\omega(\al)=\ucirc$ as desired.
\end{proof}

Now we show that if there exists an angle $\ta$ whose impression
does not contain $p$ then no critical point has a dense orbit in $J$.

\begin{lemma}\label{densenotcrit} Suppose that there exists an angle $\ta$
whose impression does not contain $p$. Suppose that $\al$ is an angle with a
dense orbit in $\ucirc$. Then $\al$ is K-separate and $\imp(\al)$ does not contain
a critical point of $P$. In particular, no critical point can have a dense
orbit in $J$.
\end{lemma}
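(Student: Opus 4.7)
I plan to prove the three assertions in the order (I) $\al$ is K-separate, (II) $\imp(\al)$ contains no critical point, (III) no critical point has dense $P$-orbit in $J$. For (II) given (I), I will use the standard fact that a critical point $c \in J$ of local multiplicity $k \geq 2$ belongs to the impressions of at least $k$ distinct angles (a consequence of lifting a shrinking crosscut at $P(c)$ through the $k$ preimage branches near $c$); hence such a $c$ in $\imp(\al)$ would also lie in $\imp(\al')$ for some $\al' \neq \al$, contradicting (I). Then (III) is immediate from Lemma~\ref{densequiv}: a critical point with dense $P$-orbit lies in some $\imp(\al)$, which forces $\al$ to have dense $\si$-orbit by Lemma~\ref{densequiv}, contradicting (II).

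The main task is (I), which I address by contradiction. Suppose $\imp(\al) \cap \imp(\be) \neq \emptyset$ for some $\be \neq \al$ and set $K_0 = \imp(\al) \cup \imp(\be)$. Pick $x \in \imp(\al) \cap \imp(\be)$; by Lemma~\ref{densequiv} applied twice, $x$ has dense $P$-orbit in $J$ and $\be$ has dense $\si$-orbit in $\ucirc$. Then $K_0$ is a ray continuum connected to a set of angles containing $\{\al,\be\}$. If $K_0$ is wandering, Theorem~\ref{nowander1} forces every angle connected to $K_0$ to have non-dense $\si$-orbit, contradicting density of $\al$'s orbit. So $K_0$ is non-wandering: there exist $0 \leq i < j$ with $P^i(K_0) \cap P^j(K_0) \neq \emptyset$, hence $\imp(\si^i(\ga)) \cap \imp(\si^j(\da)) \neq \emptyset$ for some $\ga, \da \in \{\al,\be\}$; set $m = j-i \geq 1$.

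In the ``parallel'' case $\ga = \da$ the argument is clean: iterating $P$ yields $\imp(\mu_k) \cap \imp(\si^m(\mu_k)) \neq \emptyset$ for the dense sequence $\mu_k = \si^{i+k}(\ga)$, and upper semi-continuity of impressions promotes this to $\imp(\mu) \cap \imp(\si^m(\mu)) \neq \emptyset$ for every $\mu \in \ucirc$. By Lemma~\ref{exishort}(2), which is available thanks to the standing hypothesis, there is a K-separate periodic angle $\nu$ of period $n > m$; Lemma~\ref{pnotksep} gives $\imp(\nu) = \{w\}$ a singleton disjoint from every other impression. Since $\si^m(\nu) \neq \nu$, $\imp(\nu) \cap \imp(\si^m(\nu)) = \emptyset$, a contradiction.

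The main obstacle is the ``cross'' case $\ga \neq \da$, say $\ga = \al, \da = \be$; here the non-wandering relation $\imp(\si^{i+k}(\al)) \cap \imp(\si^{j+k}(\be)) \neq \emptyset$ mixes two different orbits, so the previous trick does not directly apply. My remedy is to use this relation together with the iterated original intersection $\imp(\si^k(\al)) \cap \imp(\si^k(\be)) \neq \emptyset$. Fix a K-separate periodic $\nu$ of period $n > m$, so $\imp(\nu) = \{w\}$. Since $\be$ has dense $\si$-orbit, extract a subsequence $k_l$ with $\si^{j+k_l}(\be) \to \nu$, and further subsequences so that $\si^{i+k_l}(\al) \to \mu$ and $\si^{i+k_l}(\be) \to \mu'$. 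Upper semi-continuity applied to the non-wandering relation forces $w \in \imp(\mu)$, so $\mu = \nu$ by K-separateness; then applied to the original iterated relation it forces $w \in \imp(\mu')$, so $\mu' = \nu$. Continuity of $\si^m$ now gives $\si^m(\nu) = \lim \si^m(\si^{i+k_l}(\be)) = \lim \si^{j+k_l}(\be) = \nu$, contradicting that $\nu$ has period $n > m$. Threading upper semi-continuity through four coupled angle sequences to force a periodicity relation violating the chosen period is the subtle heart of the argument.
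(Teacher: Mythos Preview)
Your proof is correct and takes a genuinely different route from the paper's for the key assertion (I). Both arguments agree that if $K_0$ is wandering then Theorem~\ref{nowander} immediately gives a contradiction, and both handle (II) and (III) in the same way. The divergence is in the non-wandering case. The paper works with a maximal finite chain $I=\imp(\al_0)\cup\dots\cup\imp(\al_k)$ and builds a nested family of closed sets: starting from $B(p)$ it forms the set $Q$ of angles whose impressions meet $B(p)$, then $\imp(Q)$, and iterates this ``thickening by impressions'' $k+1$ times to obtain a proper closed subset $T\subsetneq J$; a geometric separation argument shows every forward image of $I$ meets $\imp(Q)$, so the orbit of $\imp(\al)$ is trapped in $T$, contradicting density via Lemma~\ref{densequiv}. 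You instead exploit the dense supply of K-separate periodic angles from Lemma~\ref{exishort} directly: you propagate the two intersection relations (the original one and the non-wandering one) along dense orbits, pass to limits via upper semi-continuity, and pin the limit angles to a single K-separate periodic $\nu$, forcing $\si^m(\nu)=\nu$ with $m$ strictly less than the period of $\nu$. Your approach is more elementary---it avoids the $T$-construction and needs only the pair $\{\al,\be\}$---while the paper's approach yields the slightly stronger intermediate fact that \emph{any} finite connected union of impressions through $\imp(\al)$ is wandering, which it then combines with Kiwi's bound (Theorem~\ref{limwan}) before invoking Theorem~\ref{nowander}.
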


\begin{proof} The proof consists of several steps. Suppose that there are
finitely many angles $\al=\al_0, \al_1, \dots, \al_k$ such that the union $I$
of their impressions is a continuum. Let us show that then $I$ has to be
wandering (all its iterates are pairwise disjoint). By way of contradiction and
without loss of generality we may assume that $P(I)\cap I\ne \0$. Consider the
set $Q$ of all angles whose impressions are not disjoint from $B(p)$ and the
set $\imp(Q)$. Then $Q\ne \ucirc$ by Lemma~\ref{pnotksep}. Hence by
Lemma~\ref{exishort}(4) $\imp(Q)\ne J$. Let us show that we may assume that
\emph{all} images of $I$ intersect $\imp(Q)$.

By Lemma~\ref{pnotksep} the set $Q$ contains two angles with non-disjoint
impressions. Denote these angles $\ga$ and $\be$. Then the rays of these angles
together with their impressions cut the plane into two components $H$ and $G$
so that the periodic K-separate angles from $(\ga, \be)$ have point-impressions
in $H$ and the periodic K-separate angles from $(\be, \ga)$ have
point-impressions in $G$. Choose periodic K-separate angles $\ta'\in (\ga,
\be)$ and $\ta''\in (\be, \ga)$, and then choose $k<l$ so that $\si^k(\al)\in
(\ga, \be)$ is very close to $\ta'$ and $\si^l(\al)\in (\be, \ga)$ is very
close to $\ta''$. Then the continuum $Z=P^k(I)\cup P^{k+1}(I)\cup \dots \cup
P^l(I)$ connects points from $H$ to points from $G$. Hence by Lemma~\ref{jic}
$Z\cap \imp(Q)\ne \0$. Thus, from some time on the images of $I$ are
non-disjoint from $\imp(Q)$, and we may assume that in fact $I\cap \imp(Q)\ne
\0$ and hence \emph{all} images of $I$ intersect $\imp(Q)$.

Just like the set $B(p)$ was constructed as the union of all impressions
non-disjoint from $p$, and the set $\imp(Q)$ was constructed as the union of
all the impressions non-disjoint from $B(p)$, this process can be continued for
$k+1$ more steps resulting into a union of impressions which we will denote by
$T$. Clearly, $T$ is a closed; moreover, since no point of $\imp(E)$ can belong
to $T$ then $T$ is a proper subset of $J$. Since $\al$ has a dense orbit, by
Lemma~\ref{densequiv} the orbit of any point of $\imp(\al)$ is dense in $J$. On
the other hand, by the previous paragraph the orbit of $\imp(\al)$ is contained
in $T$ and $T\ne J$, a contradiction. So, the assumption that $I$ is not
wandering leads to a contradiction which implies that $I$ is wandering. Observe
that then by Theorem~\ref{limwan} $k\le 2^d$.

Let us now show that $\al$ is K-separate. Indeed, otherwise let $\al'\ne \al$
be such that $\imp(\al)\cap \imp(\al')\ne \0$. Then by the above the maximal
finite collection of angles $\al_0=\al, \al_1=\al', \dots, \al_k$ such that the
union $I$ of their impressions is connected consists of $k+1>1$ angles and is
such that $I$ is wandering. By maximality the impressions of other angles are
disjoint from $I$. However the orbit of $\al$ is dense which contradicts
Theorem~\ref{nowander}. Hence $\al$ is K-separate as desired. To complete the
proof it remains to notice that if a critical point $c$ belongs to $\imp(\al)$
then, because locally around $c$ the map $P$ is not one-to-one, there exists an
angle $\al'$ such that $\si(\al)=\si(\al')$ and $c\in \imp(\al')$ implying that
$\al$ is not K-separate. Hence $\imp(\al)$ does not contain critical points.
Since this holds for any angle with dense orbit we conclude that by
Lemma~\ref{densequiv} no critical point can have a dense orbit in $J$.
\end{proof}

Lemma~\ref{transimp} completes a series of claims made in Lemma~\ref{densequiv}
and Lemma~\ref{densenotcrit}.

\begin{lemma}\label{transimp} Suppose that there exists an angle $\ta$ whose
impression does not contain $p$. Let $\al$ be an angle such that the
$\si$-orbit of $\al$ is dense in $\ucirc$. Then $\imp(\al)$ is a singleton, and,
moreover, the angle $\al$ is K-separate.
\end{lemma}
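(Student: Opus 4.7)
The plan is to invoke Lemma~\ref{densenotcrit} for the K-separateness of $\al$ and then prove that $\imp(\al)$ is a singleton by contradiction, assuming it were a non-degenerate continuum.

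First I would argue that this assumption forces $\imp(\al)$ to be a wandering non-degenerate continuum whose forward orbit avoids the critical set. Every iterate $\si^n(\al)$, $n\ge 0$, also has dense $\si$-orbit, so Lemma~\ref{densenotcrit} applies to each of them: $\si^n(\al)$ is K-separate and $\imp(\si^n(\al))$ contains no critical point of $P$. The K-separateness of every iterate makes the impressions $\{\imp(\si^n(\al))\}_{n\ge 0}$ pairwise disjoint, and the critical-freeness makes $P$ a local homeomorphism on a neighborhood of each impression. Inductively then $P^n(\imp(\al))\subset\imp(\si^n(\al))$ is a non-degenerate continuum, and the family $\{P^n(\imp(\al))\}_{n\ge 0}$ is pairwise disjoint; in other words $\imp(\al)$ is a wandering, non-degenerate subcontinuum of $J$ whose entire forward orbit misses the critical set of~$P$.

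To derive a contradiction, I would follow the template of the final paragraph of the proof of Theorem~\ref{nowander}. That theorem formally requires $|\A'|\ge 2$ (so that Theorem~\ref{lamexist} produces a non-trivial lamination), so it cannot be invoked directly here: by K-separateness, $\imp(\al)$ is a ray continuum connected to the single angle $\{\al\}$. Nonetheless the geometric idea carries over. Using the density of $\{\si^n(\al)\}_{n\ge 0}$ in $\ucirc$ together with the density of K-separate periodic angles (Lemma~\ref{exishort}(2)), I would choose a K-separate periodic angle $\ga$ with landing point $q=\imp(\ga)$, pick two iterates $\si^{n_1}(\al),\si^{n_2}(\al)$ close to $\ga$ and on opposite sides of it, and assemble a Jordan curve from the rays $R_{\si^{n_1}(\al)}$ and $R_{\si^{n_2}(\al)}$ together with their (necessarily small) impressions, an equipotential arc joining them, and a short transverse arc near $q$. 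The bounded region $V$ then satisfies: $\ol V\cap J=X$ is a general puzzle-piece whose exit continua sit near $q$, $V$ contains some still later iterate $P^m(\imp(\al))$ (obtained by density), and $V$ excludes the Cremer point~$p$. An appropriate iterate $P^k$ maps the exit continua inside their own wedges, so Corollary~\ref{degimp1}(1) applies; by the basic uniCremer assumption every fixed point of $P^k$ in $X$ is repelling with only fixed rays landing, so $X$ is forced to be a repelling fixed point. This contradicts the presence of the non-degenerate continuum $P^m(\imp(\al))\subset X$.

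The main obstacle is the explicit construction of the enclosing Jordan region. With only a single angle $\al$ connected to $\imp(\al)$, a single ray together with its impression cannot separate the plane, so the construction must use two iterates of $R_\al$ clustering tightly on either side of a chosen K-separate periodic ray. Verifying that the enclosed region meets some forward iterate $P^m(\imp(\al))$ but misses $p$, that it contains no non-repelling or bi-accessible periodic points of $P^k$ besides the possible vertex $q$, and that the exit continua are sent into their respective wedges by $P^k$, is where upper semi-continuity of impressions and the basic uniCremer hypothesis are used in full.
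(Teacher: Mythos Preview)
Your reduction to Lemma~\ref{densenotcrit} for K-separateness is correct, and so is the observation that every forward iterate $\si^n(\al)$ is K-separate with critical-free impression, hence $\imp(\al)$ is wandering. The gap is in the puzzle-piece construction.

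A general puzzle-piece in the sense of Corollary~\ref{degimp1} requires each exit continuum $E_j$ to be a ray continuum satisfying $\Pi(A_j)\subset E_j\subset\imp(A_j)$ for a finite set $A_j$, and the sets $E'_j=E_j\cup\bigcup_{\be\in A_j}R_\be$ must together separate the plane so that $X$ lies in one complementary component. With $A_j$ a single angle, $E'_j$ is a ray together with a non-separating continuum and does \emph{not} disconnect $\C$. Using two iterates $\si^{n_1}(\al),\si^{n_2}(\al)$ does not help: by K-separateness their impressions are disjoint, so you cannot merge them into one connected exit continuum (the definition forces $E_1\subset\imp(\si^{n_1}(\al))\cup\imp(\si^{n_2}(\al))$, which is disconnected), and as two separate single-ray exit continua their union still fails to separate. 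Your ``short transverse arc near $q$'' would have to pass through $J$ or through the basin of infinity; either way the resulting region is not of the form $X=\bigcup E_j\cup(K\cap A)$ required by Corollary~\ref{degimp1}, and the wedge $W_j$ into which $P^k(E_j)$ must fall is not even defined for a one-ray exit set. So the appeal to Corollary~\ref{degimp1} breaks down at the outset.

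The paper's argument sidesteps this entirely by a shrinking argument based on Ma\~n\'e's theorem. Since by Lemma~\ref{densenotcrit} no critical point has dense orbit, the post-critical closure $B'$ is nowhere dense; one then finds (via Lemma~\ref{exishort} and Lemma~\ref{condklei}) angles $\tau'<\ga<\tau''$ with \emph{intersecting} impressions, bounding a region $V$ that is positively distant from $B'$. This intersection is exactly what makes $R_{\tau'}\cup\imp(\tau')\cup\imp(\tau'')\cup R_{\tau''}$ separate the plane --- but it is used only to isolate a Jordan neighbourhood $W''\supset J\cap V$ far from the critical orbits, not to build a puzzle-piece. Ma\~n\'e's theorem then gives that univalent pullbacks of $W''$ have diameters tending to $0$; since infinitely many forward images of $\imp(\al)$ enter $W''$ (by density of the orbit of $\al$ near $\ga$ and upper semi-continuity), pulling back forces $\dia(\imp(\al))=0$.
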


\begin{proof} By Ma\~n\'e \cite{mane93} the closure $B'$ of the union of orbits of
all critical points of $P$ contains $p$. By Lemma~\ref{densenotcrit} $B'$ is
nowhere dense in $J$. By Lemma~\ref{exishort} there is an angle $\ga\in E$ such
that the singleton $\imp(\ga)$ is not contained in $B'$. By the upper
semi-continuity of impressions we can find an arc $U$ around $\ga$ so that the
union of impressions of angles from $\ol{U}$ is positively distant from $B'$.
By Lemma~\ref{condklei} we can find two angles $\tau'<\ga<\tau''$ in $U$ (the
order is considered within $U$) such that $\imp(\tau')\cap \imp(\tau'')\ne \0$.
Set $U'=(\tau', \tau'')$.

Consider the two connected open components of $\C\setminus R_{\tau'}\cup
\imp(\tau')\cup \imp(\tau'') \cup R_{\tau''}$; let $V$ be the component
containing rays of angles from $U'$. Then there are points of $J$ in $V$.
Indeed, otherwise $\imp(\tau')\cup \imp(\tau'')$ contains the impressions of
all angles from $U'$ which yields that a forward $\si$-image of $\tau'$ or
$\tau''$ will coincide with $J$ implying by Theorem~\ref{indec} that $J$ is
indecomposable, a contradiction with the standing assumption. Let us prove that
$\al$ is K-separate and its impression is a point. Indeed, by the previous
paragraph $V$ is positively distant from $B'$. Since $V$ is simply connected,
we can find two Jordan disks $W'\supset \overline{W''}\supset J\cap V$ which
are both positively distant from $B'$. Therefore all pull-backs of $W'$ and
$W''$ are univalent. By Ma\~n\'e \cite{mane93} this implies that the diameter
of the pull-backs of $W''$ converge to $0$ as the power of the map approaches
infinity. Observe that as $\si$-images of $\al$ approach $\ga$, the
corresponding $P$-images of $\imp(\al)$ get closer and closer to $\imp(\ga)$
(because of the upper semi-continuity of impressions) and thus we may assume
that infinitely many $P$-images of $\imp(\al)$ are contained in $W''$. Pulling
$W''$ back along the orbit of $\imp(\al)$ for longer and longer time we see
that the diameter of $\imp(\al)$ cannot be positive, and hence
$\imp(\al)=\{y\}$ is a point as claimed. Moreover, by Lemma~\ref{densenotcrit}
the angle $\al$ is K-separate. This completes the proof.

\end{proof}

We can now prove our main theorem.

\begin{theorem}\label{maina} For a uniCremer polynomial $P$ the following
facts are equivalent:

\begin{enumerate}

\item there is an impression not containing the Cremer point;

\item there is a degenerate impression;

\item the set $Y$ of all K-separate angles with degenerate
impressions contains all angles with dense orbits and a dense set of periodic
angles, and the Julia set $J$ is connected im kleinen at landing points of the
corresponding rays;

\item there is a point at which the Julia set is connected im
kleinen.

\end{enumerate}

\end{theorem}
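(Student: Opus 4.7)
The plan is to establish the cycle $(2)\Rightarrow(1)\Rightarrow(3)\Rightarrow(2)$ together with the side arcs $(3)\Rightarrow(4)\Rightarrow(1)$, which together yield all four equivalences. Two of these links are essentially immediate. $(3)\Rightarrow(2)$ follows because the set $Y$ in $(3)$ contains periodic angles and hence at least one degenerate impression. $(3)\Rightarrow(4)$ follows because $(3)$ itself asserts that $J$ is connected im kleinen at the landing point of the ray of any angle in $Y$, and $Y$ is nonempty.

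Next I would dispatch the two implications terminating in $(1)$ by contrapositive using Lemma~\ref{redwarf}. If $(1)$ fails, every impression contains the Cremer point $p$, so $J$ is a red dwarf Julia set. Lemma~\ref{redwarf}(2) then supplies a uniform $\e>0$ bounding the diameter of every impression from below, so no impression can be degenerate, contradicting $(2)$. Lemma~\ref{redwarf}(3) simultaneously forbids any point at which $J$ is connected im kleinen, contradicting $(4)$.

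The principal work is $(1)\Rightarrow(3)$, and I would assemble it from the dynamical lemmas already proved. Assume an impression avoids $p$, so the hypotheses of Lemmas~\ref{exishort} and \ref{transimp} are in force. Lemma~\ref{exishort}(2) tells me that the set $E$ of K-separate periodic angles is dense in $\ucirc$, and Lemma~\ref{pnotksep}(1) says that for each $\ta\in E$ the impression $\imp(\ta)$ is a single repelling periodic point; thus $E\subseteq Y$ and $(3)$ receives its dense set of periodic angles. Lemma~\ref{transimp} gives directly that any angle $\al$ with $\si$-orbit dense in $\ucirc$ is K-separate with degenerate impression, so $\al\in Y$. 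Finally, for every $\ta\in Y$ the angle is K-separate with singleton impression $\{x\}$, so Lemma~\ref{condklei} yields that $J$ is connected im kleinen at $x$, the unique landing point of $R_\ta$. This furnishes the last clause of $(3)$ and closes the cycle.

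The genuine difficulty has already been absorbed into the ingredients cited above: the main obstacle is Lemma~\ref{transimp}, which uses univalent pullbacks together with Ma\~n\'e's recurrence theorem to shrink the impression of any dense-orbit angle to a point, together with Lemma~\ref{exishort}, which produces the abundance of K-separate periodic angles needed both for density in $\ucirc$ and as a pool from which to draw nontrivial dynamics. With those in hand, the proof of Theorem~\ref{maina} itself is an orderly bookkeeping between the four conditions, and I expect no further obstacle.
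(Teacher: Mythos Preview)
Your proposal is correct and follows essentially the same approach as the paper: both assemble the equivalences from Lemmas~\ref{redwarf}, \ref{pnotksep}, \ref{exishort}, \ref{transimp}, and \ref{condklei}, with all the substantive work already absorbed into those lemmas. The only cosmetic difference is the logical ordering---the paper runs the cycle $(1)\Rightarrow(2)\Rightarrow(3)\Rightarrow(4)\Rightarrow(1)$ and obtains $(2)\Rightarrow(1)$ inside $(2)\Rightarrow(3)$ via the observation that the $d$ angles $\al+i/d$ all have degenerate impressions and not all can equal $\{p\}$, whereas you get $(2)\Rightarrow(1)$ by the contrapositive through Lemma~\ref{redwarf}(2); both routes are equally short.
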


\begin{proof} Let us prove that (1) implies (2). Indeed, suppose that
there is an angle not containing $p$ in its impression. Then by
Lemma~\ref{exishort} there exist angles with degenerate impressions.

We show that (2) implies (3). Indeed, let $\imp(\al)$ be a point. Then so
are the impressions of the angles $\al+1/d, \dots, \al+(d-1)/d$. At least one
of them is not $p$, so we may assume that $\imp(\al)\ne \{p\}$. Then by
Lemma~\ref{exishort} the set $E$ is dense in $\ucirc$. Let us now consider an
angle $\be$ whose $\si$-orbit is dense in $\ucirc$. By Lemma~\ref{transimp}
$\imp(\be)$ is a point and $\be$ is K-separate. By Lemma~\ref{condklei} $J$ is
connected im kleinen at the landing points of the rays with arguments either
from $E$, or from the set of angles with dense in $\ucirc$ orbits. This shows
that indeed (2) implies (3).

Clearly, (3) implies (4). It remains to show that (4) implies (1). Indeed, if
all impressions contain $p$ then by Lemma~\ref{redwarf} $J$ is nowhere
connected im kleinen, a contradiction. The proof is complete.
\end{proof}

\bibliographystyle{amsalpha}
\bibliography{/lex/references/refshort}
\providecommand{\bysame}{\leavevmode\hbox to3em{\hrulefill}\thinspace}
\providecommand{\MR}{\relax\ifhmode\unskip\space\fi MR }
\providecommand{\MRhref}[2]{%
  \href{http://www.ams.org/mathscinet-getitem?mr=#1}{#2}
} \providecommand{\href}[2]{#2}

\end{document}